\documentclass[a4paper]{article}
\usepackage{latexsym,bm}
\usepackage{amssymb,amsmath,amsthm}
\usepackage[english,german]{babel}
\usepackage[colorlinks=true]{hyperref}
\usepackage{color}
\topmargin 0pt \textheight 22 true cm \textwidth 16  true cm
\oddsidemargin 0pt \evensidemargin 0pt
\allowdisplaybreaks[4]
\usepackage[all]{xy}
\newtheorem{theorem}{Theorem}[section]
\newtheorem{lemma}[theorem]{Lemma}
\newtheorem{remark}[theorem]{Remark}
\newtheorem{proposition}[theorem]{Proposition}
\newtheorem{definition}[theorem]{Definition}
\newtheorem{corollary}[theorem]{Corollary}
\numberwithin{equation}{section}
\hypersetup{linkcolor=blue,urlcolor=red,citecolor=red}
\title{Stabilizability of linear systems with discrete observation mode\thanks{This work was partially supported by the National Natural Science Foundation of China under grant 11971022, and by Fundamental Research Funds for the Central Universities, China University of Geosciences (Wuhan) (CUGSX01)}}
\author{Hanbing Liu\thanks{School of Mathematics and Physics, China University of Geosciences (Wuhan), Wuhan, 430074, China (hanbing272003@aliyun.com)} \and Gengsheng Wang\thanks{Center for Applied Mathematics, Tianjin University, Tianjin, 300072, China (wanggs@yeah.net)}  \and Huaiqiang Yu\thanks{School of Mathematics, Tianjin University, Tianjin 300354, China (huaiqiangyu@tju.edu.cn,  huaiqiangyu@yeah.net)}}

\date{}
\begin{document}
\selectlanguage{english}
\maketitle
\begin{abstract}
 For linear control systems, the usual state feedback stabilizability has two  components: one is a continuous observation mode (i.e., to observe solutions continuously in time), and the other is a class of feedback laws (which is usually the space of all of the linear and bounded operators from a state space to a control space). This paper
studies the stabilizability for abstract linear control systems, with a discrete observation
mode (i.e., to  observe solutions discretely in time) and two different classes of feedback laws.
 We first characterize these types of stabilizabilities via some weak observability inequalities for the dual systems. Then, we use these characterizations to reveal the connections between
  these types of stabilizabilities and those with continuous observation mode. Finally, we show some applications of the aforementioned weak observability inequalities.

\end{abstract}

{\bf Keywords}. stabilizability, observation modes, classes of feedback laws, weak observability inequalities
\vskip 5pt
{\bf AMS subject classifications.} 93D15, 93D23, 93C57, 93C25

\section{Introduction}\label{yu-section-1}
 \subsection{Notation}
   We write $\mathbb{R}^+:=(0,+\infty)$; $\mathbb{N}:=\{0, 1,2,\ldots\}$; $\mathbb{N}^+:=\{1,2,\ldots\}$;
    $\mathbb{Q}^+$ for the set of all positive rational numbers.
       Given  $t\in\mathbb{R}^+$, we let $[t]:=\max\{n\in\mathbb{N}:n\leq t\}$.
   Given a Hilbert space $Y$, we write $\|\cdot\|_{Y}$ and $\langle\cdot,\cdot\rangle_{Y}$
   for its norm and  inner product, respectively, and we identify it with its dual space. We write $C(\mathbb{R}^+;Y)$
   for the space of all of the continuous functions from $\mathbb{R}^+$ to $Y$ and $PC(\mathbb{R}^+;Y)$
   for the space of all of the step functions from $\mathbb{R}^+$ to $Y$. When $L$ is a densely defined and closed linear operator on a Hilbert space $Y$, we use $D(L)(=\{y\in Y: Ly\in Y\})$ to denote its domain
   (which is a Hilbert space with the norm $\|y\|_{D(L)}:=(\|y\|_{Y}^2+\|Ly\|_{Y}^2)^{1/2}$). We use
   $\rho(L)$ to denote its resolvent set and use $\sigma(L)$ to denote the complementary set of  $\rho(L)$.
     Given two Hilbert spaces $Y_1$ and $Y_2$, we write $\mathcal{L}(Y_1;Y_2)$ for the space of all of the linear and bounded operators from $Y_1$ to $Y_2$ and let $\mathcal{L}(Y_1):=\mathcal{L}(Y_1;Y_1)$. Given $F\in\mathcal{L}(Y)$, we write $F^*\in\mathcal{L}(Y)$ for its adjoint operator. We denote by
     $I$ the identity operator on any space.
     We  use $\verb"i"$ to denote the unitary imaginary number, i.e., $\verb"i"^2=-1$. We use $\mathcal{F}$ and $\mathcal{F}^{-1}$ to denote the Fourier transform and the inverse of the Fourier transform, respectively. Given a matrix $P$, we
    use  $P^\top$ to denote its transposition.
          For any set $E$, we let $\chi_E$ be its characteristic function. We use
$C(\cdots)$ to denote a positive constant that depends on what is enclosed in the brackets.

\subsection{Stabilizability, observation modes, and classes of feedback laws}
    Let $Y$ and $U$ be two Hilbert spaces.
   Furthermore, let $A: D(A)\subset Y\to Y$ generate a $C_0$-semigroup
 $\{S(t)\}_{t\geq 0}$, and let $B\in\mathcal{L}(U;Y)$.
        We consider the following control system:
\begin{equation}\label{yu-5-26-1}
    y'(t)=Ay(t)+Bu(t),\;\;t\in\mathbb{R}^+,
\end{equation}
    where $u\in L^2(\mathbb{R}^+;U)$.
   We write $y(\cdot;y_0,u)$, with $y_0\in Y$, for
     the solution of the system (\ref{yu-5-26-1}) with the initial condition
    $y(0)=y_0$.
     This paper aims to study several kinds of exponential stabilizabilities (stabilizability, for short) for the system \eqref{yu-5-26-1}.
\par
 The usual stabilizability task is to find a feedback law $F\in \mathcal{L}(Y;U)$ such that
the following closed-loop system is exponentially stable:
\begin{equation}\label{yu-6-18-17-12}
    y'(t)=(A+BF)y(t),\;\;t\in\mathbb{R}^+.
\end{equation}
\begin{remark}\label{remark1.1w}
Several notes on the  stabilizability above are stated as follows.
\begin{itemize}
  \item [(a1)]  This is a kind of state feedback  stabilizability.
  Its two key components are as follows: the first is the observation mode to observe a solution $y(\cdot)$  continuously in time over
  $\mathbb{R}^+$ (i.e., the observation system reads $z(t)=y(t), t\in\mathbb{R}^+$); the second is the class of feedback laws $\mathcal{L}(Y;U)$.
   This kind of stabilizability has been widely studied
   (e.g., \cite{Liu-22, Ma-22, Pritchard, Trelat-20} and the references therein).

  \item [(a2)]  To distinguish this kind of stabilizability from others, we call it the \textbf{(CC)}-stabilizability (i.e., the stabilizability
   with the continuous observation mode and feedback laws in $\mathcal{L}(Y;U)$).
  Notice that the first and the second \textbf{C} in the \textbf{(CC)}-stabilizability
  represent ``continuous observation mode'' and``constant-valued feedback laws,'' respectively.
  \item [(a3)]
 The \textbf{(CC)}-stabilizability for the system (\ref{yu-5-26-1})
  is characterized  by the weak observability inequality
 \begin{equation}\label{yu-6-18-3}
    \|S(T)^*\varphi\|_Y^2\leq C\int_0^T\|B^*S(T-t)^*\varphi\|_{U}^2dt+\delta\|\varphi\|^2_Y\;\;\mbox{for any}\;\;\varphi\in Y,
\end{equation}
  where  $T>0$,  $\delta\in(0,1)$, and
    $C\geq 0$ are constants independent of $\varphi$.
    (see \cite[Theorem 1]{Trelat-20} and  also \cite[Section 3.1]{Liu-22} for the case where $B$ is allowed to be unbounded.)

\end{itemize}
\end{remark}

We now introduce another kind of observation mode.
We observe a solution $y(\cdot)$ (of the system (\ref{yu-5-26-1}))
such that
\begin{equation*}\label{yu-6-25-1}
    z(t)=(H_Ty)(t),
\end{equation*}
where $H_T$ (with $T>0$ arbitrarily fixed) is the linear operator from
 $C(\mathbb{R}^+;Y)$ to $PC(\mathbb{R}^+;Y)$, defined as follows: For each  $f\in C(\mathbb{R}^+;Y)$,
 \begin{equation}\label{yu-6-25-2}
    (H_Tf)(t):= f([t/T]T),\;\;
   \;t\in\mathbb{R}^+.
\end{equation}
This is a kind of discrete (in time) observation mode. The observation (or sampling) occurs at the moments: $0$, $T$, $2T,\dots$.
$z(\cdot)$ is a special $Y$-valued step function that takes only one value in each time interval $[kT, (k+1)T)$
with $k\in \mathbb{N}$.
{\it We call this observation mode the $T$-periodic discrete observation mode, and
 $T$ is called an observation (or sampling) period.} We treat
 $H_T$
as an observation operator in time. This kind of observation mode is more suitable
in practical applications, which elicits another kind of stabilizability
with the following definition:

\begin{definition}\label{yu-definition-5-27-1}
    Let $T>0$ and $H_T$ be given by \eqref{yu-6-25-2}. The system (\ref{yu-5-26-1})  is said to be \textbf{(DC)}$_T$-stabilizable
    (i.e.,
     stabilizable with the $T$-periodic discrete observation mode
     and  feedback laws in $\mathcal{L}(Y;U)$), if there is an $F\in
     \mathcal{L}(Y;U)$ such that the following closed-loop system is exponentially stable:
     \begin{equation}\label{yu-6-18-2}
    y'(t)=Ay(t)+BF(H_Ty)(t),\;\;t\in\mathbb{R}^+.
\end{equation}
\end{definition}

\begin{remark}\label{yu-remark-6-18-1}
 Several notes on Definition \ref{yu-definition-5-27-1} are stated as follows:
\begin{itemize}
\item [(b1)] One can directly check that for each $y_0\in Y$, the system \eqref{yu-6-18-2} with the initial condition $y(0)=y_0$ has a unique solution in $C([0,+\infty);Y)$.
  \item [(b2)] In the \textbf{(DC)}$_T$-stabilizability, the observation mode
is to observe solutions $T$-periodic discretely in time (which differs from the observation mode in
the
\textbf{(CC)}-stabilizability), while the class of feedback laws is $\mathcal{L}(Y;U)$ (which is the same as that in the \textbf{(CC)}-stabilizability).
Notice that \textbf{D}, \textbf{C}, and $T$ in the \textbf{(DC)}$_T$-stabilizability
  represent ``discrete observation mode,'' ``constant-valued feedback laws,'' and ``$T$-periodic,'' respectively.
   \item [(b3)] When the system (\ref{yu-5-26-1}) is \textbf{(DC)}$_T$-stabilizable, we write
   $y_F(\cdot)$
   for a solution to \eqref{yu-6-18-2} and then let
   \begin{equation*}
   u(t):=F(H_Ty_F)(t)=\sum_{i=1}^{\infty}\chi_{[(i-1)T, iT)}(t)u_i,\;\; t\in \mathbb{R}^+,
   \end{equation*}
   where $u_i=Fy_F((i-1)T)$ with $i\in \mathbb{N}^+$. Since $y_F$ undergoes exponential decay, we have $(u_i)_{i\in\mathbb{N}^+}\in l^2(\mathbb{N}^+;U)$. Moreover, $y_F(\cdot)$ satisfies
    \begin{equation}\label{e102}
y'(t)=Ay(t)+B\sum_{i=1}^{\infty}\chi_{[(i-1)T, iT)}(t)u_i,\;\;t\in\mathbb{R}^+.
\end{equation}
The system \eqref{e102}, with $(u_i)_{i\in\mathbb{N}^+}\in l^2(\mathbb{N}^+;U)$, can be treated as a
sampled-data control system, which is indeed the system (\ref{yu-5-26-1}) with controls in the following
subspace:
\begin{equation*}
\Big\{u\in L^2(\mathbb{R}^+;U)\;|\; u(t)=\sum_{i=1}^{\infty}\chi_{[(i-1)T, iT)}(t)u_i,\; (u_i)_{i\in\mathbb{N}^+}\in l^2(\mathbb{N}^+;U)\Big\}.
\end{equation*}
 It deserves mentioning that sampled-data control systems have been
widely studied (e.g. \cite{Chen-96, Kabamba-87, Logemann-05, Rebarber-97, Rebarber-98, Rosen-92} and the references therein).

  \end{itemize}
    \end{remark}

We next define another class of feedback laws. For each
  $T>0$, we let
   \begin{equation}\label{yu-6-21-2}
    \mathcal{F}_{T,\infty}:=\{F(\cdot)\in L^{\infty}_{loc}(\mathbb{R}^+; \mathcal{L}(Y; U)): F(t+T)=F(t) \;\mbox{a.e.}\;t\in\mathbb{R}^+\}.
\end{equation}
   This new class of feedback laws leads to the following new kinds of stabilizability for the system (\ref{yu-5-26-1}).
\begin{definition}\label{yu-def-6-21-1}
    Let $T>0$, and let $H_T$ and $\mathcal{F}_{T,\infty}$ be given by
         (\ref{yu-6-25-2}) and \eqref{yu-6-21-2}, respectively.
\begin{enumerate}
  \item [$(i)$] The system (\ref{yu-5-26-1}) is said to be \textbf{(DP)}$_{T}$-stabilizable (i.e.,
  stabilizable
    with the $T$-periodic discrete observation mode and feedback laws in $\mathcal{F}_{T,\infty}$), if there is an $F(\cdot)\in \mathcal{F}_{T,\infty}$ such that the following system is exponentially stable:
\begin{equation}\label{yu-6-21-1}
    y'(t)=Ay(t)+BF(t)(H_Ty)(t),\;\;t\in\mathbb{R}^+.
\end{equation}
  \item [$(ii)$]  The system (\ref{yu-5-26-1}) is said to be \textbf{(CP)}$_{T}$-stabilizable (i.e.,
  stabilizable
    with the continuous observation mode and feedback laws in $\mathcal{F}_{T,\infty}$) if there is
    an $F(\cdot)\in \mathcal{F}_{T,\infty}$ such that the following system is exponentially stable:
\begin{equation}\label{yu-6-28-1wang7-13}
    y'(t)=Ay(t)+BF(t)y(t),\;\;t\in\mathbb{R}^+.
\end{equation}
\end{enumerate}
\end{definition}
\begin{remark}
Several notes on Definition \ref{yu-def-6-21-1} are as follows:
\begin{itemize}
 \item [(c1)] One can directly check that for each $y_0\in Y$, the system \eqref{yu-6-21-1}/\eqref{yu-6-28-1wang7-13} with the initial condition $y(0)=y_0$ has a unique solution in $C([0,+\infty);Y)$.
  \item [(c2)] In the \textbf{(DP)}$_{T}$-stabilizability, the observation mode is
  to observe solutions $T$-periodic discretely in time,
  while
  the class of feedback laws is $\mathcal{F}_{T,\infty}$, whose elements are time $T$-periodic
  operator-valued functions. Notice that \textbf{D}, \textbf{P}, and $T$ in the \textbf{(DP)}$_T$-stabilizability
  represent  ``discrete observation mode,'' ``periodic feedback laws,'' and ``$T$-periodic,'' respectively.

  \item [(c3)]
  In  the \textbf{(CP)}$_{T}$-stabilizability,  the observation mode is
  to observe solutions continuously in time, while
  the class of feedback laws is $\mathcal{F}_{T,\infty}$.

  \item [(c4)]  In the \textbf{(DP)}$_{T}$-stabilizability, $T$ plays two roles. One is in the
  observation mode, where the observation occurs at the moments $0$, $T$, $2T,\dots$, while another is in the class of feedback laws, where each $F\in\mathcal{F}_{T,\infty}$ is $T$-periodic. We do not know if taking different values of $T$ in the observation mode and the class of feedback laws would make an essential difference. This might be an interesting problem.

  \item [(c5)] The class $\mathcal{F}_{T,\infty}$ is an expansion of $\mathcal{L}(Y; U)$.
  \end{itemize}
\end{remark}

  \subsection{Motivation and aim}

For the system (\ref{yu-5-26-1}), we have introduced four types of stabilizability, which are the  \textbf{(CC)}-stabilizability,
\textbf{(CP)}$_T$-stabilizability, \textbf{(DC)}$_T$-stabilizability, and \textbf{(DP)}$_T$-stabilizability, where two different observation modes and two different
classes of feedback laws are involved.
  First, as mentioned above, the \textbf{(CC)}-stabilizability is characterized by
the weak observability inequality \eqref{yu-6-18-3}. However,
 the characterizations of the stabilizability for the system (\ref{yu-5-26-1}) with  discrete observation modes
 have not been examined to the best of our knowledge. This motivated us to characterize the \textbf{(DC)}$_T$-stabilizability and the \textbf{(DP)}$_T$-stabilizability via some weak observability inequalities (which may be different from \eqref{yu-6-18-3}).
Second, it is important to illustrate
the difference between the discrete and continuous observation modes.
It would also be interesting to reveal the possibly different influences of different classes of feedback
laws on the stabilizability. This inspired us to find relationships between the above-mentioned
four types of stabilizabilities.

\subsection{Main results}

 The first main result characterizes the \textbf{(DC)}$_T$-stabilizability via some weak observability inequalities.
\begin{theorem}\label{th03}
    Let $T>0$. Then, the following statements are equivalent:
\begin{enumerate}
  \item [(i)] The system (\ref{yu-5-26-1}) is \textbf{(DC)}$_T$-stabilizable.
 \item [(ii)] There are constants $\omega>0$, $C_1\geq 0$, and $C_2>0$ such that
  when $N\in\mathbb{N}^+$,
\begin{equation}\label{e107-b}
\|S(NT)^*\varphi\|_Y^2\leq C_1\sum_{i=1}^{N}\Big\|\int_{(i-1)T}^{iT}B^*S(t)^*\varphi dt\Big\|_U^2+C_2e^{-\omega NT} \|\varphi\|_Y^2\;\;\mbox{for any}\;\;\varphi\in Y.
\end{equation}
      \item [(iii)] There are constants $N\in \mathbb{N}^+$, $\delta\in (0,1)$, and $C\geq 0$ such that
\begin{equation}\label{e107}
\|S(NT)^*\varphi\|_Y^2\leq C\sum_{i=1}^{N}\Big\|\int_{(i-1)T}^{iT}B^*S(t)^*\varphi dt\Big\|_U^2+\delta \|\varphi\|^2_Y\;\;\mbox{for any}\;\;\varphi\in Y.
\end{equation}
   \end{enumerate}
\end{theorem}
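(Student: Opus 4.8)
\section*{Proof proposal for Theorem \ref{th03}}

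The plan is to reformulate \textbf{(DC)}$_T$-stabilizability as a discrete-time stabilizability problem for a pair of bounded operators, and then to prove the cycle $(i)\Rightarrow(ii)\Rightarrow(iii)\Rightarrow(i)$. Set $\mathcal{A}:=S(T)\in\mathcal{L}(Y)$ and $\mathcal{B}:=\big(\int_0^T S(r)\,dr\big)B\in\mathcal{L}(U;Y)$. Solving \eqref{yu-6-18-2} on each interval $[(i-1)T,iT)$ with the piecewise-constant control generated by a constant $F\in\mathcal{L}(Y;U)$ shows that the sampled state obeys $y(iT)=\mathcal{A}_F\,y((i-1)T)$ with $\mathcal{A}_F:=\mathcal{A}+\mathcal{B}F$; since the evolution over one period is uniformly bounded, \eqref{yu-6-18-2} is exponentially stable iff the spectral radius of $\mathcal{A}_F$ is $<1$, i.e. iff the discrete system $\xi_{k+1}=\mathcal{A}\xi_k+\mathcal{B}v_k$ is stabilizable by a constant feedback. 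Using the semigroup identity $S(r)^*S(kT)^*=S(kT+r)^*$ one computes $\mathcal{B}^*\mathcal{A}^{*k}\varphi=\int_{kT}^{(k+1)T}B^*S(s)^*\varphi\,ds$ and $\mathcal{A}^{*N}=S(NT)^*$, so that \eqref{e107-b} and \eqref{e107} are exactly the discrete weak observability inequalities attached to $(\mathcal{A},\mathcal{B})$. Thus Theorem \ref{th03} is the discrete-time analogue of the characterization recalled in Remark \ref{remark1.1w}, and it suffices to argue in terms of $(\mathcal{A},\mathcal{B})$.

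For $(i)\Rightarrow(ii)$: if $\|\mathcal{A}_F^k\|\le M\gamma^k$ with $\gamma\in(0,1)$, expand $(\mathcal{A}_F^*)^N=(\mathcal{A}^*+F^*\mathcal{B}^*)^N$ and telescope to obtain $\mathcal{A}^{*N}\varphi=(\mathcal{A}_F^*)^N\varphi-\sum_{k=0}^{N-1}(\mathcal{A}_F^*)^{N-1-k}F^*(\mathcal{B}^*\mathcal{A}^{*k}\varphi)$; taking norms, using $\|(\mathcal{A}_F^*)^{j}\|\le M\gamma^{j}$, $(a+b)^2\le2a^2+2b^2$ and the weighted Cauchy--Schwarz bound $\big(\sum_k\gamma^{N-1-k}c_k\big)^2\le\frac1{1-\gamma}\sum_k c_k^2$ yields \eqref{e107-b} with $\omega=-2(\ln\gamma)/T$, $C_2=2M^2$ and $C_1=2M^2\|F\|^2/(1-\gamma)$. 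The implication $(ii)\Rightarrow(iii)$ is immediate upon choosing $N$ with $C_2e^{-\omega NT}<1$. (One can also pass directly from $(iii)$ to $(ii)$ by the self-improvement $\|\mathcal{A}^{*mN}\varphi\|^2\le C\sum_{k=0}^{mN-1}\|\mathcal{B}^*\mathcal{A}^{*k}\varphi\|^2+\delta^m\|\varphi\|^2$, proved by induction on $m$ via $\mathcal{A}^NG_N\mathcal{A}^{*N}=G_{2N}-G_N$ for the Gramian $G_N=\sum_{k=0}^{N-1}\mathcal{A}^k\mathcal{B}\mathcal{B}^*\mathcal{A}^{*k}$, followed by a short argument handling $N$ not a multiple of $mN$.)

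The substantial implication is $(iii)\Rightarrow(i)$. The crucial observation is that the remainder $\delta\|\varphi\|^2$ in \eqref{e107} allows one to factor the observation operator: writing $O\varphi:=(\mathcal{B}^*\mathcal{A}^{*k}\varphi)_{k=0}^{N-1}$, inequality \eqref{e107} reads $\|\mathcal{A}^{*N}\varphi\|^2\le\|(\sqrt{C}\,O\varphi,\sqrt{\delta}\,\varphi)\|^2$, so there is a contraction $\Xi=(\Xi_1,\Xi_2)$ on $\ell^2\times Y$ with $\mathcal{A}^{*N}\varphi=\sqrt{C}\,\Xi_1 O\varphi+\sqrt{\delta}\,\Xi_2\varphi$. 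Passing to adjoints, and noting that $O^*$ is (up to reversing the control components) the $N$-step control-to-state map, the open-loop control $v:=-\sqrt{C}\,\Xi_1^*y_0$ drives $\xi_0=y_0$ to $\xi_N=\sqrt{\delta}\,\Xi_2^*y_0$, whence $\|\xi_N\|\le\sqrt{\delta}\,\|y_0\|$ (with $\sqrt{\delta}<1$) and $\|v\|_{\ell^2}^2\le C\|y_0\|^2$. Iterating this construction over consecutive blocks of length $N$ and using the uniform bound on the evolution inside a block shows that the infinite-horizon cost $\inf_v\sum_{k\ge0}(\|\xi_k\|^2+\|v_k\|^2)$ is $\le c\|y_0\|^2$ for every $y_0$. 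Standard linear-quadratic theory for discrete-time systems then supplies a bounded $P\ge I$ solving the algebraic Riccati equation, and the feedback $F:=-(I+\mathcal{B}^*P\mathcal{B})^{-1}\mathcal{B}^*P\mathcal{A}$ makes $\langle P\cdot,\cdot\rangle$ a Lyapunov function satisfying $\langle P\mathcal{A}_F\xi,\mathcal{A}_F\xi\rangle\le\langle P\xi,\xi\rangle-\|\xi\|^2\le(1-\|P\|^{-1})\langle P\xi,\xi\rangle$; hence $r(\mathcal{A}_F)<1$ and the system is \textbf{(DC)}$_T$-stabilizable. The main obstacle is precisely this passage from \eqref{e107} to a state-reducing control of uniformly bounded energy, namely the contraction-factorization step that exploits the $\delta\|\varphi\|^2$ term, together with verifying that the infinite-dimensional LQ machinery returns \emph{exponential} (not merely strong) decay, which holds here because the running cost penalizes the full state.
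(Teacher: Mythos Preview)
Your proposal is correct and follows essentially the same route as the paper's proof: both reduce to the discrete pair $(\Phi,D)=(\mathcal{A},\mathcal{B})$, prove $(i)\Rightarrow(ii)$ by a telescoping/duality estimate on the closed-loop iteration, take $(ii)\Rightarrow(iii)$ trivially, and for $(iii)\Rightarrow(i)$ use a Douglas-type factorization of \eqref{e107} to produce a state-contracting control of bounded cost (the paper's Lemma~\ref{lm201}, via \cite[Lemma~5.1]{Wang-19}), iterate it over $N$-blocks to bound the infinite-horizon LQ value (the paper's Proposition~\ref{lm202}), and then invoke discrete LQ/Riccati theory to obtain a feedback with $r(\mathcal{A}_F)<1$ (the paper cites \cite{Zabczyk-74}). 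The only cosmetic differences are that you phrase everything in the discrete operator language from the outset and spell out the Lyapunov inequality $\langle P\mathcal{A}_F\xi,\mathcal{A}_F\xi\rangle\le(1-\|P\|^{-1})\langle P\xi,\xi\rangle$ explicitly, whereas the paper carries the continuous-time formulation alongside and defers the Riccati/spectral-radius step to Zabczyk's theorems.
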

A direct consequence of Theorem \ref{th03} is as follows.
\begin{corollary}\label{coro1.7,7-25}
The system (\ref{yu-5-26-1}) is \textbf{(DC)}$_T$-stabilizable for all $T>0$ if and only if
for each $T>0$, there are constants $N\in\mathbb{N}^+$, $\delta\in(0,1)$, and  $C\geq 0$ such that
    the inequality (\ref{e107}) is true.
\end{corollary}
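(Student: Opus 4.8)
The plan is to derive the corollary directly from Theorem \ref{th03}, using only the equivalence of its statements (i) and (iii). Fix an arbitrary $T>0$. By that equivalence, the system \eqref{yu-5-26-1} is \textbf{(DC)}$_T$-stabilizable if and only if there exist constants $N\in\mathbb{N}^+$, $\delta\in(0,1)$, and $C\geq 0$ (all of which are allowed to depend on $T$) such that the inequality \eqref{e107} holds for every $\varphi\in Y$. Thus the statement ``the system is \textbf{(DC)}$_T$-stabilizable'' and the statement ``there are constants $N,\delta,C$ for which \eqref{e107} holds'' are equivalent for each individual $T$.

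It then remains to universally quantify over $T>0$ on both sides. For the ``only if'' direction: if the system is \textbf{(DC)}$_T$-stabilizable for all $T>0$, then for each such $T$ the equivalence above furnishes constants $N(T)$, $\delta(T)$, $C(T)$ making \eqref{e107} true, which is precisely the right-hand condition of the corollary. Conversely, if for each $T>0$ there are constants $N,\delta,C$ with \eqref{e107}, then statement (iii) of Theorem \ref{th03} holds at that $T$, hence by the theorem the system is \textbf{(DC)}$_T$-stabilizable; since $T$ was arbitrary, it is \textbf{(DC)}$_T$-stabilizable for all $T>0$.

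There is no genuine obstacle here: the argument is purely formal once Theorem \ref{th03} is available. The only point deserving a line of care is the bookkeeping of quantifiers --- the constants $N$, $\delta$, $C$ in \eqref{e107} are permitted to depend on $T$, which matches both the phrasing ``for each $T>0$, there are constants\dots'' in the corollary and the fact that in Theorem \ref{th03}(iii) these constants are attached to the single fixed observation period $T$. Accordingly I would keep the proof to a few sentences, simply citing the $(i)\Leftrightarrow(iii)$ part of Theorem \ref{th03} and noting that both sides of the corollary are obtained by prefixing ``for all $T>0$'' to the two equivalent conditions.
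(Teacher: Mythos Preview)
Your argument is correct and matches the paper's approach: the paper presents this corollary as ``a direct consequence of Theorem \ref{th03}'' without supplying any further proof, and your reasoning --- applying the equivalence $(i)\Leftrightarrow(iii)$ of Theorem \ref{th03} at each fixed $T$ and then quantifying universally over $T>0$ --- is exactly the intended justification.
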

\color{black}

  The second main result concerns the characterization of the \textbf{(DP)}$_{T}$-stabilizability
  via some weak observability inequalities.
\begin{theorem}\label{th03-1}
The following statements are equivalent:
\begin{enumerate}

  \item[(i)] The system (\ref{yu-5-26-1}) is \textbf{(DP)}$_{T}$-stabilizable for all
  $T>0$.
  \item[(ii)] The system (\ref{yu-5-26-1}) is \textbf{(DP)}$_{T}$-stabilizable
  for some $T>0$.

  \item[(iii)] There are constants $T>0$, $\delta\in(0,1)$, and $C\geq 0$ such that
  the weak observability inequality \eqref{yu-6-18-3} holds.

 \end{enumerate}
\end{theorem}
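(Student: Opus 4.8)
The plan is to close the cycle (i)$\Rightarrow$(ii)$\Rightarrow$(iii)$\Rightarrow$(i); since (i)$\Rightarrow$(ii) is trivial, only the last two implications require work. Throughout I would use the equivalence recalled in Remark~\ref{remark1.1w}(a3) between \eqref{yu-6-18-3} and the \textbf{(CC)}-stabilizability of \eqref{yu-5-26-1}. The guiding idea is that a feedback in $\mathcal{F}_{T,\infty}$ read off a $T$-periodic discrete observation acts, on each interval $[kT,(k+1)T)$, as an essentially arbitrary $\mathcal{L}(Y;U)$-valued function of the single vector $y(kT)$; this is exactly the freedom needed, on the one hand, to emulate a constant stabilizing feedback of the continuous closed loop, and on the other hand to place the sampled system within the scope of the controllability-with-cost/observability duality that produces \eqref{yu-6-18-3}.

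For (iii)$\Rightarrow$(i): by Remark~\ref{remark1.1w}(a3), (iii) provides $F_0\in\mathcal{L}(Y;U)$ such that $A+BF_0$ generates a $C_0$-semigroup $\{S_{F_0}(t)\}_{t\ge0}$ with $\|S_{F_0}(t)\|_{\mathcal{L}(Y)}\le Me^{-\omega t}$ for some $M,\omega>0$. Fix $T>0$ arbitrarily and set $F(t):=F_0S_{F_0}(t-[t/T]T)$ for $t\in\mathbb{R}^+$. This function is $T$-periodic by construction, and since $t\mapsto S_{F_0}(t)z$ is continuous for every $z\in Y$ and $\|F(t)\|_{\mathcal{L}(Y;U)}\le M\|F_0\|$, it lies in $\mathcal{F}_{T,\infty}$. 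I would then check that the closed-loop system \eqref{yu-6-21-1} driven by this $F(\cdot)$ reproduces the continuous closed loop exactly: on $[kT,(k+1)T)$ one has $(H_Ty)(t)=y(kT)$ and $F(t)=F_0S_{F_0}(t-kT)$, so \eqref{yu-6-21-1} reads $y'(t)=Ay(t)+BF_0S_{F_0}(t-kT)y(kT)$, whose mild solution is $y(t)=S_{F_0}(t-kT)y(kT)$ by the bounded-perturbation identity $S_{F_0}(\tau)x=S(\tau)x+\int_0^\tau S(\tau-s)BF_0S_{F_0}(s)x\,ds$; an induction over $k$ then gives $y(t)=S_{F_0}(t)y_0$ for all $t\ge0$, whence $\|y(t)\|_Y\le Me^{-\omega t}\|y_0\|_Y$. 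Thus \eqref{yu-5-26-1} is \textbf{(DP)}$_T$-stabilizable for this $T$, and since $T>0$ was arbitrary, (i) follows.

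For (ii)$\Rightarrow$(iii): let $T_0>0$ and $F(\cdot)\in\mathcal{F}_{T_0,\infty}$ make \eqref{yu-6-21-1} exponentially stable, say $\|y_F(t;y_0)\|_Y\le Me^{-\omega t}\|y_0\|_Y$. Choose $n_0\in\mathbb{N}^+$ with $\delta:=M^2e^{-2\omega n_0T_0}<1$ and put $T:=n_0T_0$. For each $y_0\in Y$, the trajectory $y_F(\cdot;y_0)$ is the mild solution of \eqref{yu-5-26-1} associated with the open-loop control $u_{y_0}(t):=F(t)(H_{T_0}y_F(\cdot;y_0))(t)$; using the $T_0$-periodicity of $F$ and $\|y_F(kT_0;y_0)\|_Y\le M\|y_0\|_Y$ one obtains $\|u_{y_0}\|_{L^2(0,T;U)}^2\le C\|y_0\|_Y^2$ with $C:=n_0T_0M^2\|F\|_{L^\infty(0,T_0;\mathcal{L}(Y;U))}^2$, while $\|y(T;y_0,u_{y_0})\|_Y=\|y_F(T;y_0)\|_Y\le\sqrt{\delta}\,\|y_0\|_Y$. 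Then I would run the standard duality: pairing $y(T;y_0,u_{y_0})=S(T)y_0+\int_0^TS(T-t)Bu_{y_0}(t)\,dt$ with $\varphi\in Y$ gives
\begin{equation*}
\langle y_0,S(T)^*\varphi\rangle_Y=\langle y(T;y_0,u_{y_0}),\varphi\rangle_Y-\int_0^T\langle u_{y_0}(t),B^*S(T-t)^*\varphi\rangle_U\,dt,
\end{equation*}
and taking $y_0:=S(T)^*\varphi$, using the Cauchy--Schwarz inequality with the two bounds above, dividing by $\|S(T)^*\varphi\|_Y$ (the case $S(T)^*\varphi=0$ being trivial) and applying Young's inequality yields, for every $\varepsilon>0$,
\begin{equation*}
\|S(T)^*\varphi\|_Y^2\le(1+\varepsilon)\delta\,\|\varphi\|_Y^2+(1+\varepsilon^{-1})C\int_0^T\|B^*S(T-t)^*\varphi\|_U^2\,dt.
\end{equation*}
Picking $\varepsilon>0$ so small that $(1+\varepsilon)\delta<1$ turns this into \eqref{yu-6-18-3} with constants $T$, $(1+\varepsilon)\delta$, $(1+\varepsilon^{-1})C$; i.e.\ (iii) holds.

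I do not expect a genuine obstacle here; the conceptual point is the one highlighted in the first paragraph, and everything else is routine: the membership $F(\cdot)\in\mathcal{F}_{T,\infty}$ of the feedback built in (iii)$\Rightarrow$(i) (which rests on the strong continuity of $\{S_{F_0}(t)\}$ and the uniform bound $\|F(t)\|\le M\|F_0\|$), the identification of the periodic closed-loop trajectory with a mild solution of \eqref{yu-5-26-1} for a suitable control in both directions, and the variation-of-constants manipulations in the duality step.
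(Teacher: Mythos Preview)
Your proof is correct and follows essentially the same route as the paper: for $(iii)\Rightarrow(i)$ you build the same periodic feedback $F(t)=F_0S_{F_0}(t-[t/T]T)$ and identify the resulting \textbf{(DP)}$_T$ closed loop with the \textbf{(CC)} closed loop (the paper phrases the identification at the sample times and then interpolates, but the argument is the same), and for $(ii)\Rightarrow(iii)$ you run the same open-loop/duality computation, the only cosmetic difference being that the paper takes the supremum over $y_0$ rather than substituting $y_0=S(T)^*\varphi$, and uses the factor-of-$2$ squaring rather than Young's inequality.
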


The last main result gives the relationships between the four types of  stabilizabilities.

\begin{theorem}\label{thm1.11-7-26}
For the system (\ref{yu-5-26-1}), the following conclusions are true:
\begin{enumerate}
\item[(i)] The \textbf{(DC)}$_T$-stabilizability for some $T>0$ implies the \textbf{(CC)}-stabilizability, but the reverse is not true in general.
 \item[(ii)] Both  the \textbf{(DP)}$_T$-stabilizability for some $T>0$ and  the \textbf{(DP)}$_T$-stabilizability for all $T>0$ are equivalent to the \textbf{(CC)}-stabilizability.
\item[(iii)]  Both the \textbf{(CP)}$_T$-stabilizability for some $T>0$ and
the \textbf{(CP)}$_T$-stabilizability for all $T>0$
are equivalent to the  \textbf{(CC)}-stabilizability.
\item[(iv)] The \textbf{(DC)}$_T$-stabilizability for some $T>0$ implies the \textbf{(DP)}$_T$-stabilizability for all $T>0$, but the reverse is not true in general.
  \item[(v)]   The \textbf{(DP)}$_T$-stabilizability for all/some $T>0$ is equivalent to the \textbf{(CP)}$_T$-stabilizability for all/some $T>0$.
\end{enumerate}
\end{theorem}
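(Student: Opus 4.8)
The plan is to reduce Theorem~\ref{thm1.11-7-26} to the three characterizations already available: \textbf{(DC)}$_T$-stabilizability $\Leftrightarrow$ \eqref{e107} (Theorem~\ref{th03}), \textbf{(DP)}$_T$-stabilizability $\Leftrightarrow$ \eqref{yu-6-18-3} (Theorem~\ref{th03-1}), and \textbf{(CC)}-stabilizability $\Leftrightarrow$ \eqref{yu-6-18-3} (Remark~\ref{remark1.1w}(a3)). Granting these, part (ii) is immediate: Theorem~\ref{th03-1} says that \textbf{(DP)}$_T$-stabilizability for some/all $T>0$ is equivalent to the existence of $T>0$, $\delta\in(0,1)$, $C\ge0$ for which \eqref{yu-6-18-3} holds, and by Remark~\ref{remark1.1w}(a3) this is precisely \textbf{(CC)}-stabilizability. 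One direction of (iii) is likewise trivial: if $F\in\mathcal L(Y;U)$ stabilizes \eqref{yu-6-18-17-12}, then viewing $F$ as the constant (hence $T$-periodic for every $T>0$) element of $\mathcal F_{T,\infty}$ turns \eqref{yu-6-28-1wang7-13} into \eqref{yu-6-18-17-12}, so \textbf{(CC)} $\Rightarrow$ \textbf{(CP)}$_T$ for all $T>0$.

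The core of (iii) — and, I expect, the main obstacle — is the converse: \textbf{(CP)}$_T$-stabilizability for some $T>0$ $\Rightarrow$ \textbf{(CC)}-stabilizability. I would argue by duality. Let $F(\cdot)\in\mathcal F_{T,\infty}$ make \eqref{yu-6-28-1wang7-13} exponentially stable, say $\|z(t;z_0)\|_Y\le Me^{-\omega t}\|z_0\|_Y$ for its solution $z(\cdot;z_0)$, and fix $\tau>0$ with $M^2e^{-2\omega\tau}<1$. For each $z_0\in Y$ set $u(s):=F(s)z(s;z_0)$ for $s\in[0,\tau]$; since $F\in L^\infty(\mathbb R^+;\mathcal L(Y;U))$ (being $L^\infty_{loc}$ and $T$-periodic) and $z(\cdot;z_0)\in C([0,\tau];Y)$, one gets $u\in L^2(0,\tau;U)$ with $\|u\|_{L^2(0,\tau;U)}^2\le C\|z_0\|_Y^2$, and $z(\cdot;z_0)$ solves \eqref{yu-5-26-1} with this $u$, so $\|y(\tau;z_0,u)\|_Y^2\le M^2e^{-2\omega\tau}\|z_0\|_Y^2=:\delta\|z_0\|_Y^2$ with $\delta\in(0,1)$. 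Thus \eqref{yu-5-26-1} enjoys the controllability-with-cost property in time $\tau$: for every $z_0$ there is $u$ with $\|u\|^2\le C\|z_0\|^2$ and $\|y(\tau;z_0,u)\|^2\le\delta\|z_0\|^2$. Writing $y(\tau;z_0,u)=S(\tau)z_0+L_\tau u$ with $L_\tau u=\int_0^\tau S(\tau-s)Bu(s)\,ds$, so that $L_\tau^*\varphi=\big(s\mapsto B^*S(\tau-s)^*\varphi\big)$, the identity $\langle S(\tau)^*\varphi,z_0\rangle_Y=\langle\varphi,y(\tau;z_0,u)\rangle_Y-\langle L_\tau^*\varphi,u\rangle_{L^2}$ together with Cauchy--Schwarz and Young's inequality with a small parameter yields $\|S(\tau)^*\varphi\|_Y^2\le C'\int_0^\tau\|B^*S(\tau-s)^*\varphi\|_U^2\,ds+\delta'\|\varphi\|_Y^2$ with $\delta'\in(0,1)$; this is \eqref{yu-6-18-3} with $T$ replaced by $\tau$, hence (by Remark~\ref{remark1.1w}(a3)) \textbf{(CC)}-stabilizability. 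Closing \textbf{(CC)} $\Rightarrow$ \textbf{(CP)}$_T$ for all $T$ $\Rightarrow$ \textbf{(CP)}$_T$ for some $T$ $\Rightarrow$ \textbf{(CC)} proves (iii).

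For part (i), the forward implication uses Theorem~\ref{th03}: if \eqref{yu-5-26-1} is \textbf{(DC)}$_T$-stabilizable then \eqref{e107} holds for some $N,\delta,C$, and Cauchy--Schwarz gives $\big\|\int_{(i-1)T}^{iT}B^*S(t)^*\varphi\,dt\big\|_U^2\le T\int_{(i-1)T}^{iT}\|B^*S(t)^*\varphi\|_U^2\,dt$; summing over $i=1,\dots,N$ and using $\int_0^{NT}\|B^*S(t)^*\varphi\|_U^2\,dt=\int_0^{NT}\|B^*S(NT-t)^*\varphi\|_U^2\,dt$ turns \eqref{e107} into \eqref{yu-6-18-3} with $T$ replaced by $NT$, so Remark~\ref{remark1.1w}(a3) gives \textbf{(CC)}-stabilizability. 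For the failure of the converse I would take $Y=U=\ell^2(\mathbb N^+)$, $A=\mathrm{diag}(\mathrm{i}n)_{n\ge1}$ (skew-adjoint, generating the unitary group $S(t)=\mathrm{diag}(e^{\mathrm{i}n t})$), and $B=I$: the feedback $F=-I$ makes $A+BF$ the generator of $e^{-t}S(t)$, so the system is \textbf{(CC)}-stabilizable; but testing \eqref{e107} at the basis vector $e_n$ gives left side $\|S(NT)^*e_n\|_Y^2=1$ while the first term on the right equals $C\sum_{i=1}^N\big|\int_{(i-1)T}^{iT}e^{-\mathrm{i}n t}\,dt\big|^2=4CN\,\sin^2(nT/2)/n^2\le 4CN/n^2\to0$ as $n\to\infty$, so \eqref{e107} fails for every $N$, $\delta\in(0,1)$, $C$ and every $T>0$, whence the system is \textbf{(DC)}$_T$-stabilizable for no $T>0$.

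Parts (iv) and (v) are then bookkeeping. For (iv): \textbf{(DC)}$_T$ for some $T$ $\Rightarrow$ \textbf{(CC)} (by (i)) $\Rightarrow$ \textbf{(DP)}$_T$ for all $T$ (by (ii)); and the same $\ell^2$-example is \textbf{(DP)}$_T$-stabilizable for all $T$ (being \textbf{(CC)}-stabilizable) yet \textbf{(DC)}$_T$-stabilizable for no $T$, so the reverse fails. For (v): combining (ii) and (iii), the chain \textbf{(DP)}$_T$ for some/all $T$ $\Leftrightarrow$ \textbf{(CC)} $\Leftrightarrow$ \textbf{(CP)}$_T$ for some/all $T$ gives the claimed equivalences. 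The only genuinely non-routine steps are the duality computation in (iii) and the construction of a single system failing \textbf{(DC)}$_T$-stabilizability simultaneously for all $T>0$; the latter is handled by the explicit example, whose point is that the sampled-in-time control Gramian on the $n$-th mode decays like $n^{-2}$ regardless of $T$, while the continuous-time cost does not.
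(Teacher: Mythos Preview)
Your argument is correct and follows essentially the same route as the paper: parts (ii), (iv), (v) are bookkeeping from Theorem~\ref{th03-1} and Remark~\ref{remark1.1w}(a3); the forward direction of (i) is the Cauchy--Schwarz passage from \eqref{e107} to \eqref{yu-6-18-3}; and the nontrivial direction of (iii) is the duality computation turning exponential stability of \eqref{yu-6-28-1wang7-13} into the weak observability inequality \eqref{yu-6-18-3}. (One minor point: after the estimate $\|S(\tau)^*\varphi\|_Y\le\sqrt{\delta}\,\|\varphi\|_Y+\sqrt{C}\,\|L_\tau^*\varphi\|$, squaring naively gives coefficient $2\delta$ on $\|\varphi\|_Y^2$, which may exceed $1$; your mention of Young's inequality with a small parameter, or simply choosing $\tau$ with $2M^2e^{-2\omega\tau}<1$, handles this.)

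The one substantive difference is your counterexample for the failure of the converse in (i). The paper invokes the Schr\"odinger equation on $\mathbb R$ with $B=-\texttt{i}I$ (its Example~3, Theorem~\ref{th301-b}), proving non-\textbf{(DC)}$_T$-stabilizability by choosing test data $\varphi$ with Fourier support concentrated near a frequency $\xi$ satisfying $\xi^2T\approx 2\pi$, so that $\int_{(i-1)T}^{iT}S(t)^*\varphi\,dt$ is arbitrarily small. Your $\ell^2$ example with $A=\mathrm{diag}(\texttt{i}n)$ and $B=I$ is the discrete-spectrum analogue of exactly this mechanism: the sampled observation on the $n$-th mode has size $O(1/n)$ uniformly in $T$, so \eqref{e107} fails for every $T>0$. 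Your version is more elementary and fully self-contained (no PDE or Plancherel machinery), while the paper's has the virtue of being a concrete physical model; but the underlying obstruction---an unbounded imaginary spectrum forcing the period-averaged output to vanish along a sequence of modes---is the same in both.
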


\begin{remark}\label{remarkwang7-26}
    Several notes on the above theorems are stated as follows.
\begin{enumerate}
\item [(d1)] Theorem \ref{th03} provides a quantitative discriminant criterion
for the \textbf{(DC)}$_T$-stabilizability for some $T>0$. Since
 the \textbf{(DC)}$_T$-stabilizability for some $T>0$ differs from the \textbf{(DC)}$_T$-stabilizability for all $T>0$ (see Example 1 in Subsection \ref{yu-sec-6-22-1}), we present Corollary \ref{coro1.7,7-25}
 as a complement of Theorem \ref{th03}. The inequality (\ref{e107}), combined with (\ref{yu-6-18-3}), can help us to analyze the connections between the \textbf{(DC)}$_T$-stabilizability, \textbf{(DP)}$_T$-stabilizability, and \textbf{(CC)}-stabilizability quantitatively.

  \item [(d2)]
       Theorem \ref{th03-1} states that the \textbf{(DP)}$_T$-stabilizability for some $T>0$ and the \textbf{(DP)}$_T$-stabilizability for all $T>0$ are equivalent, and both of them are characterized by the inequality (\ref{yu-6-18-3}), which
    is
    exactly the weak observability inequality characterizing
      the \textbf{(CC)}-stabilizability.

\item [(d3)]  Theorem \ref{thm1.11-7-26} can be described  by the following diagram:
\begin{displaymath}    \xymatrix@=8ex{ \textbf{(CC)} \ar@{=>}@<.9ex>[r]\ar@{=>}|\setminus@<.9ex>[r] & \textbf{(DC)}_T \ar@{=>}@<.9ex>[l]\ar@{=>}@<.9ex>[d]  \\   \textbf{(CP)}_T \ar@{<=>}[u] \ar@{<=>}[r] & \textbf{(DP)}_T  \ar@{=>}@<.9ex>[u]\ar@{=>}|\backslash @<.9ex>[u] \ar@{<=>}[ul] }\end{displaymath}
 Theorem \ref{thm1.11-7-26} provides the following two facts about the stabilizability for the system (\ref{yu-5-26-1}):

\begin{itemize}
  \item The function of the  $T$-periodic
  discrete observation mode is weaker than that of the continuous observation mode from two aspects. First, for the feedback class $\mathcal{L}(Y;U)$, the stabilizability may be lost when the observation mode is changed from the continuous type to the $T$-periodic discrete type
      (see  $(i)$ of Theorem \ref{thm1.11-7-26}). Second, for the continuous observation mode,
      the system (\ref{yu-5-26-1})
      is stabilizable with the class $\mathcal{L}(Y;U)$ if and only if
      it is stabilizable with the class
       $\mathcal{F}_{T,\infty}$, while for the $T$-periodic discrete observation mode,
        when the system (\ref{yu-5-26-1}) is stabilizable with the class
       $\mathcal{F}_{T,\infty}$, it may be not stabilizable with the class $\mathcal{L}(Y;U)$ (see $(iii)$ and $(iv)$
       of Theorem \ref{thm1.11-7-26}).

  \item The expansion of the feedback class can make up for the deficiency of the discrete
  observation mode.
  Indeed,
   the \textbf{(CC)}-stabilizability does not imply the \textbf{(DC)}$_T$-stabilizability for some $T>0$ in general (see $(i)$ of Theorem \ref{thm1.11-7-26}), which shows a deficiency of the discrete
  observation mode relative to the continuous observation mode.
      However, this deficiency is exactly compensated by expanding
      $\mathcal{L}(Y;U)$ to $\mathcal{F}_{T,\infty}$ (see $(ii)$ of Theorem \ref{thm1.11-7-26}).

\end{itemize}

\end{enumerate}
\end{remark}

\subsection{Related work }

Some related work as is mentioned below:

\begin{itemize}

  \item In a finite-dimensional setting where $A$ and $B$ are matrices, some characterizations for
  the \textbf{(DC)}$_T$-stabilizability have been obtained by the Kalman controllability decomposition
  (e.g., \cite{Chen-96, Hutus-1970})

  \item In the infinite-dimensional setting, some characterizations of the \textbf{(DC)}$_T$-stabilizability
 were obtained \cite{Rosen-92, Rebarber-97}
  for the system \eqref{yu-5-26-1}, where some additional assumptions were imposed, such as
  the assumption that the system can be decomposed to an unstable finite dimensional part and a stable infinite dimensional part
  (\cite{Rosen-92}) and that $B$ is compact from $U$ to the dual space of $D(A^*)$  with the pivot space $Y$ (\cite{Rebarber-97}).

  \item For studies on the \textbf{(DP)}$_T$-stabilizability,
   various other studies have been published
       \cite{Kabamba-87, Logemann-05, Rebarber-98, Tarn-88}.

  \item In other related work \cite{Liu-22, Trelat-20},
  some characterizations for the  \textbf{(CC)}-stabilizability were obtained via weak observability
inequalities. The design of feedback laws for the  \textbf{(CC)}-stabilizability based on the weak observability
inequality and a generalized Gramian were also studied \cite{ Ma-22}.
\end{itemize}
\subsection{Organization of this paper}
The rest of this paper is organized as follows. Section 2 presents some preliminary results. Section 3 proves our main theorems. Section 4 gives applications of the main theorems.
 \section{Preliminary}
 This section presents some results on a discrete LQ problem that will be used in the proofs of our main theorems.
{\it Throughout this section, $T>0$ is arbitrarily fixed.} We define two spaces as follows:
\begin{equation}\label{yu-6-21-b-1}
\begin{cases}
    L^2_T(0, NT; U):=\{f\in L^2(0, NT; U): f(\cdot)=\sum_{i=1}^{N}\chi_{[(i-1)T, iT)}(\cdot)f_i, \; \{f_i\}_{i=1}^{N}\subset U\},\;\;N\in\mathbb{N}^+;\\
    L^2_T(\mathbb{R}^+; U):=\{f\in L^2(\mathbb{R}^+; U): f(\cdot)=\sum_{i=1}^{\infty}\chi_{[(i-1)T, iT)}(\cdot)f_i,\; \{f_i\}_{i\in\mathbb{N}^+}\subset U\},
\end{cases}
\end{equation}
    with the $L^2(0,T;U)$-norm and the $L^2(\mathbb{R}^+; U)$-norm, respectively.
  It is clear that $L^2_T(\mathbb{R}^+; U)$/ $L^2_T(0,NT; U)$
    is a closed subspace of $L^2(\mathbb{R}^+;U)$/$L^2(0,NT;U)$ and is isomorphic to $l^2(\mathbb{N}^+;U)$/$U^{N}$.

\par
We are now in a position to introduce the above-mentioned discrete LQ problem.
Let
\begin{equation}\label{2.2wang7-11}
\Phi:=S(T)\;\;\mbox{and}\;\;D:=\int_0^TS(T-t)dtB.
\end{equation}
 Consider the discrete control system
\begin{equation}\label{e202}
y_{i}=\Phi y_{i-1}+Du_i,\;\;i\in\mathbb{N}^+,
\end{equation}
where  $(u_i)_{i\in\mathbb{N}^+}\in l^2(\mathbb{N}^+;U)$.
Two facts are stated. First, given $u_d:=(u_i)_{i\in\mathbb{N}^+}\in l^2(\mathbb{N}^+;U)$ and $y_0\in Y$, the system  \eqref{e202} with the control $u_d$ and the initial datum $y_0$ has a unique solution $(y_i(y_0, u_d))_{i\in\mathbb{N}^+}$. Second,
the connection between the systems  (\ref{e102}) and \eqref{e202} is as follows.
If $y(\cdot)$ is a solution to the system (\ref{e102}) corresponding to the control $u_d:=(u_i)_{i\in\mathbb{N}^+}$, then
$(y(iT))_{i\in \mathbb{N}^+}$ solves \eqref{e202} with the control $u_d$ and the initial datum $y(0)$.
Next, we define the following quadratic cost functional:
\begin{equation*}\label{e203}
J(u_d;y_0):=\sum_{i=1}^{+\infty}(\|y_i(y_0, u_d)\|_Y^2+\|u_i\|_U^2).
\end{equation*}
  Finally, we consider the discrete LQ problem (\textbf{d-LQ}). For any $y_0\in Y$, find $u^*_d=(u_i^*)_{i\in\mathbb{N}^+}\in l^2(\mathbb{N}^+;U)$ such that
\begin{equation*}\label{yu-6-1-1}
    J(u^*_d;y_0)=\inf_{u_d\in l^2(\mathbb{N^+};U)}J(u_d;y_0):=V(y_0).
\end{equation*}
{\it We call the problem (\textbf{d-LQ}) solvable if $V(y_0)<+\infty$ for any $y_0\in Y$.}

The main result of this section is as follows.
\begin{proposition}\label{lm202}
Let  $N\in\mathbb{N}^+$ and $\delta\in(0,1)$. Assume that there is a $C(T, N, \delta)>0$ such that for each $y_0\in Y$, there is an $u^{y_0}\in L_T^2(0,NT; U)$ such that
 \begin{equation}\label{e201}
   \frac{1}{C(T, N, \delta)}\|u^{y_0}\|_{L^2(0,NT; U)}^2+\frac{1}{\delta}\|y(NT; y_0, u^{y_0})\|_Y^2\leq \|y_0\|_Y^2,
\end{equation}
    where $y(\cdot; y_0, u^{y_0})$ is the solution of (\ref{yu-5-26-1}) corresponding to the control $u^{y_0}$ and the initial condition $y(0)=y_0$. Then, the problem (\textbf{d-LQ}) is solvable.
\end{proposition}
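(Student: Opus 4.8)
The plan is to reduce the assertion to a statement about the discrete system \eqref{e202} and then run the standard ``uniform decay over a block of $N$ steps implies a summable quadratic cost'' argument. First I would rewrite the hypothesis in discrete terms. Since $u^{y_0}\in L^2_T(0,NT;U)$, it is a step function $u^{y_0}(\cdot)=\sum_{i=1}^N\chi_{[(i-1)T,iT)}(\cdot)u^{y_0}_i$ with $\{u^{y_0}_i\}_{i=1}^N\subset U$ and $\|u^{y_0}\|_{L^2(0,NT;U)}^2=T\sum_{i=1}^N\|u^{y_0}_i\|_U^2$, and, by the connection between \eqref{e102} and \eqref{e202} recalled above, $y(NT;y_0,u^{y_0})=y_N(y_0,(u^{y_0}_i)_{i=1}^N)$. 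Hence \eqref{e201} says precisely that for every $y_0\in Y$ there is a finite control block $\{u^{y_0}_i\}_{i=1}^N\subset U$ with
\[ \frac{T}{C(T,N,\delta)}\sum_{i=1}^N\|u^{y_0}_i\|_U^2+\frac{1}{\delta}\,\|y_N(y_0,(u^{y_0}_i)_{i=1}^N)\|_Y^2\le\|y_0\|_Y^2 . \]

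Next I would estimate the part of $J$ accumulated over one such block. With $\Phi=S(T)$ and $D=\int_0^T S(T-t)\,dt\,B$ as in \eqref{2.2wang7-11}, set $M:=\max_{0\le m\le N}\big(\|\Phi^m\|_{\mathcal{L}(Y)}+\|\Phi^m D\|_{\mathcal{L}(U;Y)}\big)$, which is finite. The discrete variation-of-constants formula $y_j=\Phi^j y_0+\sum_{l=1}^j\Phi^{\,j-l}Du_l$ for \eqref{e202} gives $\|y_j\|_Y^2\le C'(T,N)\big(\|y_0\|_Y^2+\sum_{i=1}^N\|u_i\|_U^2\big)$ for $1\le j\le N$. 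Combining this with the previous display (which bounds $\sum_i\|u^{y_0}_i\|_U^2$ by $\tfrac{1}{T}C(T,N,\delta)\|y_0\|_Y^2$ and $\|y_N\|_Y^2$ by $\delta\|y_0\|_Y^2$) yields a constant $C_0=C_0(T,N,\delta)>0$ such that the block provided by the hypothesis satisfies simultaneously
\[ \sum_{i=1}^N\big(\|y_i\|_Y^2+\|u^{y_0}_i\|_U^2\big)\le C_0\,\|y_0\|_Y^2 \qquad\text{and}\qquad \|y_N\|_Y^2\le\delta\,\|y_0\|_Y^2 . \]

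Finally I would iterate and concatenate. Set $z^{(0)}:=y_0$ and, given $z^{(k)}\in Y$, apply the hypothesis with initial datum $z^{(k)}$ to get a block $\{v^{(k)}_i\}_{i=1}^N\subset U$ and put $z^{(k+1)}:=y_N(z^{(k)},(v^{(k)}_i)_{i=1}^N)$. Define $u_d=(u_i)_{i\in\mathbb{N}^+}$ by $u_{kN+j}:=v^{(k)}_j$ for $k\in\mathbb{N}$ and $1\le j\le N$. Since the trajectory of \eqref{e202} started at $y_0$ under the control $u_d$ satisfies $y_{kN}=z^{(k)}$, the second estimate above yields $\|z^{(k)}\|_Y^2\le\delta^k\|y_0\|_Y^2$, while the first yields $\sum_{j=1}^N(\|y_{kN+j}\|_Y^2+\|u_{kN+j}\|_U^2)\le C_0\|z^{(k)}\|_Y^2\le C_0\delta^k\|y_0\|_Y^2$. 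Summing over $k\in\mathbb{N}$,
\[ J(u_d;y_0)=\sum_{k=0}^{\infty}\sum_{j=1}^N\big(\|y_{kN+j}\|_Y^2+\|u_{kN+j}\|_U^2\big)\le\frac{C_0}{1-\delta}\,\|y_0\|_Y^2<+\infty , \]
so $u_d\in l^2(\mathbb{N}^+;U)$ and $V(y_0)\le J(u_d;y_0)<+\infty$; as $y_0\in Y$ was arbitrary, the problem (\textbf{d-LQ}) is solvable.

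The step I expect to require the most care is the middle one: the hypothesis controls only the terminal state $y_N$ and the control energy, whereas $J$ also penalizes the intermediate states $y_1,\dots,y_{N-1}$, so these must be controlled through the finite-step boundedness of $\Phi$ and $D$ and absorbed into a single constant $C_0(T,N,\delta)$ that does not worsen along the iteration. Everything else is the routine ``uniform decay on a fixed block $\Rightarrow$ exponential decay $\Rightarrow$ finite LQ value'' mechanism.
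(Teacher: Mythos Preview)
Your proposal is correct and follows the same core construction as the paper: iterate the block hypothesis, concatenate the resulting controls, bound the intermediate states $y_{kN+1},\dots,y_{kN+N}$ via the discrete variation-of-constants formula, and sum the resulting geometric series in $\delta^k$. The only difference is that you bound the infinite-horizon cost $J(u_d;y_0)$ directly, whereas the paper bounds the finite-horizon costs $J_{kN}(\cdot;y_0)$ uniformly in $k$ and then appeals to Lemma~\ref{yu-lemma-6-1-1}; your route is slightly more economical since it sidesteps the weak-compactness argument needed for the ``if'' part of that lemma.
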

   To prove Proposition \ref{lm202}, we need the following lemma.
\begin{lemma}\label{yu-lemma-6-1-1}
    The problem (\textbf{d-LQ}) is solvable if and only if
\begin{equation*}\label{yu-6-1-2}
\sup_{n\in\mathbb{N}^+}\inf_{u_d\in l^2(\mathbb{N}^+;U)}J_n( u_d; y_0)<+\infty\;\;\mbox{for each}\;\;y_0\in Y,
\end{equation*}
    where
\begin{equation}\label{yu-6-1-3}
    J_n( u_d; y_0):=\sum_{i=1}^{n}(\|y_i(y_0, u_d)\|_Y^2+\|u_i\|_U^2),
    \;\;\;u_d=(u_i)_{i\in\mathbb{N}^+}\in l^2(\mathbb{N}^+;U),
\end{equation}
where $(y_i(y_0, u_d))_{i\in\mathbb{N}^+}$ solves the system \eqref{e202} with the control $u_d$ and the initial datum $y_0$.
\end{lemma}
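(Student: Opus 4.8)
My plan is to prove the two implications of the equivalence separately. Throughout, fix $y_0\in Y$ and write $V_n(y_0):=\inf_{u_d\in l^2(\mathbb{N}^+;U)}J_n(u_d;y_0)$, so that the condition on the right-hand side of the equivalence reads $\sup_{n}V_n(y_0)<+\infty$. The necessity direction is immediate: if (\textbf{d-LQ}) is solvable, then for each $y_0$ there is an admissible $u_d\in l^2(\mathbb{N}^+;U)$ with $J(u_d;y_0)<+\infty$; since every truncation satisfies $J_n(u_d;y_0)\le J(u_d;y_0)$ (the tail terms being nonnegative), I get $V_n(y_0)\le J_n(u_d;y_0)\le J(u_d;y_0)$ for all $n$, and hence $\sup_n V_n(y_0)\le J(u_d;y_0)<+\infty$. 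Note that this uses only that the terms of $J$ are nonnegative, so no monotonicity of $V_n$ is needed.

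The substance lies in the sufficiency direction. Assume $M:=\sup_n V_n(y_0)<+\infty$. For each $n\in\mathbb{N}^+$ I would choose a near-minimizer of the truncated problem whose tail is set to zero, say $u_d^{(n)}=(u_1^{(n)},\dots,u_n^{(n)},0,0,\dots)$ with $J_n(u_d^{(n)};y_0)\le V_n(y_0)+1\le M+1$. Because the summand $\sum_{i=1}^{n}\|u_i^{(n)}\|_U^2$ is already part of $J_n$, each such control obeys $\|u_d^{(n)}\|_{l^2(\mathbb{N}^+;U)}^2\le M+1$, so the sequence $(u_d^{(n)})_{n}$ is bounded in the Hilbert space $l^2(\mathbb{N}^+;U)$. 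By weak compactness of bounded sets, I extract a subsequence with $u_d^{(n_k)}\rightharpoonup u_d^{*}$ weakly in $l^2(\mathbb{N}^+;U)$; in particular $u_d^{*}\in l^2(\mathbb{N}^+;U)$ is an admissible control, and it is the candidate minimizer.

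It then remains to show $J(u_d^{*};y_0)<+\infty$, which gives $V(y_0)<+\infty$ and thus solvability. The key structural fact is that, iterating \eqref{e202}, one has the explicit representation $y_i(y_0,u_d)=\Phi^{i}y_0+\sum_{j=1}^{i}\Phi^{i-j}Du_j$, so each map $u_d\mapsto y_i(y_0,u_d)$ is affine and bounded from $l^2(\mathbb{N}^+;U)$ into $Y$, hence weakly continuous; thus $y_i(y_0,u_d^{(n_k)})\rightharpoonup y_i(y_0,u_d^{*})$ weakly in $Y$ for each fixed $i$. Fix now $m\in\mathbb{N}^+$ and consider the finite-horizon functional $\Psi_m(u_d):=\sum_{i=1}^{m}(\|y_i(y_0,u_d)\|_Y^2+\|u_i\|_U^2)$, which is convex and strongly continuous, hence weakly lower semicontinuous on $l^2(\mathbb{N}^+;U)$. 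For $n_k\ge m$ the truncation gives $\Psi_m(u_d^{(n_k)})\le J_{n_k}(u_d^{(n_k)};y_0)\le M+1$, so weak lower semicontinuity yields $\Psi_m(u_d^{*})\le\liminf_k\Psi_m(u_d^{(n_k)})\le M+1$. Since this holds for every $m$, letting $m\to\infty$ gives $J(u_d^{*};y_0)=\sum_{i=1}^{\infty}(\|y_i(y_0,u_d^{*})\|_Y^2+\|u_i^{*}\|_U^2)\le M+1<+\infty$, and as $y_0\in Y$ was arbitrary, (\textbf{d-LQ}) is solvable.

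The main obstacle is precisely this sufficiency direction, and within it the passage to the limit: I must combine the weak convergence of the controls with the weak continuity of the finite-horizon state map and the weak lower semicontinuity of the truncated cost, and then manage the double limit by first fixing the horizon $m$, passing $k\to\infty$, and only afterward letting $m\to\infty$. The point requiring care is securing a uniform $l^2(\mathbb{N}^+;U)$-bound on the near-minimizers so that weak compactness applies, which is why I truncate their tails to zero and keep the perturbation $+1$ in the near-optimality.
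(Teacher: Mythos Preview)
Your proof is correct and follows essentially the same strategy as the paper's: for necessity you truncate the infinite-horizon cost, and for sufficiency you pick near-minimizers with zero tail, use weak sequential compactness in $l^2(\mathbb{N}^+;U)$, and pass to the limit via weak lower semicontinuity of the finite-horizon cost. The only cosmetic difference is that the paper extracts weak limits of the truncated state sequences alongside the controls and then verifies that the limit states satisfy \eqref{e202}, whereas you instead invoke directly the weak continuity of the finite-horizon affine map $u_d\mapsto y_i(y_0,u_d)$; both arrive at the same bound $J(u_d^*;y_0)\le M+1$.
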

\begin{proof}
We begin by proving the ``only if'' part.
  We assume that the problem (\textbf{d-LQ}) is solvable, i.e., $V(y_0)<+\infty$ for all $y_0\in Y$.
  We arbitrarily fix $y_0\in Y$. Then, for each $n\in\mathbb{N}^+$,
$$
    J_n(u_d; y_0)\leq J(u_d; y_0)\;\;\mbox{for any}\;\;u_d\in l^2(\mathbb{N}^+;U),
$$
which yields
$$
    \inf_{u_d\in l^2(\mathbb{N}^+;U)}J_n(u_d; y_0)\leq \inf_{u_d\in l^2(\mathbb{N}^+;U)}J(u_d; y_0)
    \;\;\mbox{for each}\;\;n\in\mathbb{N}^+.
$$
    Thus, we have
$$
    \sup_{n\in\mathbb{N}^+}\inf_{u_d\in l^2(\mathbb{N}^+;U)}J_n(u_d; y_0)\leq \inf_{u_d\in l^2(\mathbb{N}^+;U)}J(u_d; y_0)<+\infty.
$$

\par
We next prove the ``if'' part.
    We  suppose that $c^*:=\sup_{n\in\mathbb{N}^+}\inf_{u_d\in l^2(\mathbb{N}^+;U)}J_n(u_d; y_0)<+\infty$.
     Then,
\begin{equation}\label{yu-6-1-5}
    \inf_{u_d\in l^2(\mathbb{N}^+;U)}J_{n}(u_d; y_0)\leq  c^*\;\;\mbox{for all}\;\;n\in\mathbb{N}^+
\end{equation}
   We arbitrarily fix an $\varepsilon>0$. It follows from (\ref{yu-6-1-5}) that for each $n\in\mathbb{N}^+$, there exists an $u^\varepsilon_{d,n}=(u^\varepsilon_{nj})_{j\in\mathbb{N}^+}\in l^2(\mathbb{N}^+;U)$ such that
\begin{equation}\label{yu-6-1-6}
    J_n(u_{d,n}^\varepsilon;y_0)\leq c^*+\varepsilon.
\end{equation}
    For each $n\in\mathbb{N}^+$, we let
\begin{equation}\label{yu-6-1-7}
    \hat{u}_{nj}:=
\begin{cases}
    u^\varepsilon_{nj},&\mbox{if}\;\;1\leq j\leq n,\\
    0,&\mbox{if}\;\;j>n,
\end{cases}
    \;\;\mbox{and}\;\;\hat{y}_{nj}
    :=
\begin{cases}
    y_j(y_0, u^\varepsilon_{d,n}),&\mbox{if}\;\;0\leq j\leq n,\\
    0,&\mbox{if}\;\;j>n.
\end{cases}
\end{equation}
    From  \eqref{yu-6-1-7} and  (\ref{yu-6-1-6}), we see that
\begin{equation}\label{yu-6-1-8}
    \|(\hat{y}_{nj})_{j\in\mathbb{N}^+}\|_{l^2(\mathbb{N}^+;Y)}^2
    +\|(\hat{u}_{nj})_{j\in\mathbb{N}^+}\|_{l^2(\mathbb{N}^+;U)}^2\leq c^*+\varepsilon\;\;\mbox{for any}\;\;n\in\mathbb{N}^+.
\end{equation}
Thus, on subsequences denoted in the same manner,
\begin{equation}\label{yu-6-1-9}
    \left((\hat{u}_{nj})_{j\in\mathbb{N}^+},(\hat{y}_{nj})_{j\in\mathbb{N}^+}\right) \to \left((\tilde{u}_j)_{j\in\mathbb{N}^+},(\tilde{y}_j)_{j\in\mathbb{N}^+}\right)\;\;\mbox{weakly in}\;\;l^2(\mathbb{N}^+;U)\times l^2(\mathbb{N}^+;Y)
    \ \mbox{as}\;\;n\to+\infty.
\end{equation}
   From  \eqref{yu-6-1-9}, (\ref{yu-6-1-8}), and the weak lower semi-continuity of $\|\cdot\|_{l^2(\mathbb{N}^+;U)\times l^2(\mathbb{N}^+;Y)}$, we obtain
\begin{equation}\label{yu-6-1-9-b}
    \|(\tilde{y}_j)_{j\in\mathbb{N}^+}\|^2_{l^2(\mathbb{N}^+:Y)}+\|(\tilde{u}_j)_{j\in\mathbb{N}^+}\|^2_{l^2(\mathbb{N}^+;U)}\leq c^*+\varepsilon.
\end{equation}
  Meanwhile, by \eqref{e202},  (\ref{yu-6-1-7}), and (\ref{yu-6-1-9}), we find
  that
     $\tilde{u}_d=(\tilde{u}_j)_{j\in\mathbb{N}^+}$ and $\tilde{y}=(\tilde{y}_j)_{j\in\mathbb{N}^+}$ verify
\begin{equation*}\label{yu-6-1-10}
\begin{cases}
    \tilde{y}_{j}=\Phi \tilde{y}_{j-1}+D\tilde{u}_j,\;\;j\in\mathbb{N}^+,\\
    \tilde{y}_0=y_0.
\end{cases}
\end{equation*}
    This, together with (\ref{yu-6-1-9-b}), yields
$J(\tilde{u}_d; y_0)\leq c^*+\varepsilon$, which leads to
 $\inf_{u_d\in l^2(\mathbb{N}^+;U)}J(y_0, u_d)\leq c^*+\varepsilon
    <+\infty$.
\par
    Thus, we have completed the proof  of Lemma \ref{yu-lemma-6-1-1}.
\end{proof}

  \vskip 5pt

\begin{proof}[Proof of Proposition \ref{lm202}]
 By Lemma \ref{yu-lemma-6-1-1}, we only need to find an $M>0$ such that
\begin{equation}\label{e204}
\inf_{u_d\in l^2(\mathbb{N}^+;U)}J_n(u_d; y_0)\leq M\|y_0\|^2_Y\;\;\mbox{for all}\;\;y_0\in Y\;\;\mbox{and}\;\; n\in\mathbb{N}^+,
\end{equation}
 where $J_n(\cdot;\cdot)$ is defined by (\ref{yu-6-1-3}).
  Notice that when  $n\in\mathbb{N}^+$ and $y_0\in Y$,
\begin{equation*}\label{yu-bb-6-1-2}
    \inf_{u_d\in l^2(\mathbb{N}^+;U)}J_n(u_d; y_0)\leq \inf_{u_d\in l^2(\mathbb{N}^+;U)}J_m(u_d; y_0)
    \;\;\mbox{for all}\;\;m\geq n.
\end{equation*}
    Thus, to prove \eqref{e204}, we only need to find an $M>0$ such that
\begin{equation}\label{e204-b}
\inf_{u_d\in l^2(\mathbb{N}^+;U)}J_{kN}(u_d;y_0)\leq M\|y_0\|^2_Y\;\;\mbox{for all}\;\;y_0\in Y\;\;\mbox{and}\;\; k\in\mathbb{N}^+.
\end{equation}
\par
   We now prove \eqref{e204-b}. We arbitrarily fix a $k\in\mathbb{N}^+$ and then let $n=kN$.
    According to the assumption of this proposition, for each $y_0\in Y$, there is a  $u^{y_0}\in L_T^2(0,NT; U)$ such that  (\ref{e201}) holds, i.e.,
\begin{equation}\label{yu-6-2-1}
    \|y(NT;y_0,u^{y_0})\|_Y^2\leq \delta\|y_0\|^2_Y\;\;\mbox{and}\;\;
    \|u^{y_0}\|_{L^2(0,NT;U)}\leq C(T,N,\delta)\|y_0\|^2_Y.
\end{equation}
   By making use of the assumption of this proposition again, we see that
    for $y_{N}:=y(NT;y_0,u^{y_0})$,  there is a
    $u^{y_{N}}\in L_T^2(0,NT; U)$ such that
\begin{equation}\label{yu-6-2-2}
    \|y(NT;y_{N},u^{y_{N}})\|_Y^2\leq \delta\|y_{N}\|^2_Y\;\;\mbox{and}\;\;
    \|u^{y_{N}}\|_{L^2(0,NT;U)}\leq C(T,N,\delta)\|y_{N}\|^2_Y.
\end{equation}
By \eqref{yu-6-2-1} and \eqref{yu-6-2-2}, where we let $y_0:=y_{0N}$ and $y_N:=y_{1N}$, and using mathematical induction, we can easily prove with the aid of the assumption of this proposition that
there is a sequence $\{u^{y_{jN}}\}_{j=0}^{k-1}\subset L^2_T(0,NT;U)$ with
$y_{jN}:=y(NT; y_{(j-1)N}, u^{y_{(j-1)N}})$,
 such that for each
$j\in\{0,1,\dots,k-1\}$,
\begin{equation}\label{yu-6-2-3}
    \|y(NT;y_{jN},u^{y_{jN}})\|_Y^2\leq \delta\|y_{jN}\|^2_Y\;\;\mbox{and}\;\;
    \|u^{y_{jN}}\|_{L^2(0,NT;U)}\leq C(T,N,\delta)\|y_{jN}\|^2_Y.
\end{equation}
   Then, we define
\begin{equation}\label{yu-6-2-4}
    u(t):=
\begin{cases}
    u^{y_{0N}}(t),&\mbox{if}\;\;t\in[0,NT),\\
       \cdots\cdots,\\
    u^{y_{(k-1)N}}(t-(k-1)NT),&\mbox{if}\;\;t\in[(k-1)NT,kNT),\\
    0,&\mbox{if}\;\;t\in[kNT,+\infty).
\end{cases}
\end{equation}
 It follows from  (\ref{yu-6-2-3}) and (\ref{yu-6-2-4}) that $u\in L_T^2(\mathbb{R}^+;U)$. Hence, we infer by \eqref{yu-6-21-b-1} that for some $ (\hat{u}_i)_{i\in \mathbb{N}^+}\in l^2(\mathbb{N}^+;U)$,
     \begin{equation}\label{2.19wang7-12}
     u(t)=\sum_{i=1}^{+\infty}\chi_{[(i-1)T,iT)}(t)\hat{u}_i,\;\; t\in \mathbb{R}^+.
     \end{equation}
        We write $\hat{y}_i:=y(iT;y_0,u)$ with $i\in \mathbb{N}^+$.
     Then, it follows from (\ref{yu-5-26-1})  and (\ref{2.19wang7-12})
     that
        \begin{equation}\label{yu-6-6-2-b}
    \hat{y}_i=\Phi \hat{y}_{i-1}+D\hat{u}_i\;\;\mbox{for each}\;\;i\in\mathbb{N}^+.
\end{equation}
  Meanwhile, one can see from (\ref{yu-6-2-4}) and (\ref{2.19wang7-12}) that for each $j\in\{1, 2, \dots, k\}$,
\begin{equation}\label{yu-6-2-5}
   y_{jN} =y(jNT;y_0,u)=\hat{y}_{jN},  \;\;\; u^{y_{(j-1)N}}(t)=\sum_{i=1}^{N}\chi_{[(i-1)T, iT)}(t)\hat{u}_{i+(j-1)N}, \;\;t\in [0,NT).
\end{equation}
Now, we can obtain by \eqref{yu-6-2-5} and (\ref{yu-6-2-3}) that
\begin{eqnarray}\label{yu-6-2-6}
    \sum_{i=1}^{kN}\|\hat{u}_i\|^2_U&=&T^{-1}\sum_{j=0}^{k-1}\|u^{y_{jN}}\|_{L^2(0,NT;U)}^2
    \leq T^{-1}C(T,N,\delta)\sum_{j=0}^{k-1}\|y_{jN}\|_Y^2\nonumber\\
    &\leq&(T(1-\delta))^{-1}C(T,N,\delta)\|y_0\|_Y^2.
\end{eqnarray}
Moreover, it follows from \eqref{yu-6-6-2-b} and (\ref{yu-6-2-5})
    that when  $j\in\{0,1,\dots,k-1\}$ and  $jN< i\leq (j+1)N$,
\begin{eqnarray*}\label{yu-6-5-1}
    \|\hat{y}_{i}\|^2_{Y}&\leq&(2\|\Phi\|^2_{\mathcal{L}(Y)})^{i-jN}\|\hat{y}_{jN}\|_Y^2
    +2\|D\|_{\mathcal{L}(U;Y)}^2\sum_{k=1}^{i-jN}(2\|\Phi\|_{\mathcal{L}(Y)}^2)^{i-jN-k}
\|\hat{u}_{jN+k}\|_U^2\nonumber\\
    &\leq&(2+2\|\Phi\|_{\mathcal{L}(Y)}^2)^{i-jN}\|y_{jN}\|_Y^2
    +2 T^{-1}\|D\|_{\mathcal{L}(U;Y)}^2(2+2\|\Phi\|_{\mathcal{L}(Y)}^2)^{i-jN-1}\|u^{y_{jN}}\|_{L^2(0,NT;U)}^2,
\end{eqnarray*}
    which yields
$\sum_{i=jN+1}^{(j+1)N}\|\hat{y}_i\|_Y^2
    \leq C_1
    \|y_{jN}\|_Y^2+C_2\|u^{y_{jN}}\|_{L^2(0,NT;U)}^2$,
    where
    $C_1:=\frac{(2+2\|\Phi\|^2_{\mathcal{L}(Y)})^{N}-1}
    {1+2\|\Phi\|^2_{\mathcal{L}(Y)}}$ and $C_2:=2T^{-1}\|D\|_{\mathcal{L}(U;Y)}^2
    \left(\frac{(2+2\|\Phi\|^2_{\mathcal{L}(Y)})^{N}-1}
    {1+2\|\Phi\|_{\mathcal{L}(Y)}^2}\right)$.
This, along with (\ref{yu-6-2-3}), implies that
\begin{equation*}\label{yu-6-6-1}
    \sum_{i=1}^{kN}\|\hat{y}_i\|_Y^2\leq (C_1+C_2C(T,N,\delta))(1-\delta)^{-1}\|y_0\|_Y^2.
\end{equation*}
    The above, together with (\ref{yu-6-6-2-b}) and (\ref{yu-6-2-6}), leads to
    \begin{equation*}
    \inf_{u_d\in l^2(\mathbb{N}^+;U)}J_{kN}(u_d;y_0)\leq J_{kN}(\hat{u}_d; y_0)\leq M
    \|y_0\|_Y^2,
    \end{equation*}
    where $M:=\left(TC(T,N,\delta)+C_1+C_2C(T,N,\delta)\right)(1-\delta)^{-1}$,
    i.e., (\ref{e204-b}) is true.

   Hence, we have completed the proof of Proposition \ref{lm202}.
\end{proof}

\section{Proofs of main theorems}
    This section proves Theorems \ref{th03}, \ref{th03-1}, and \ref{thm1.11-7-26}.
\subsection{Proof of Theorem \ref{th03}}
      We recall (\ref{yu-6-21-b-1}) for the definitions of the spaces  $L^2_T(0,NT;U)$ ($N\in\mathbb{N}^+$) and $L^2_T(\mathbb{R}^+;U)$.
     The following lemma plays an important role in the proof of Theorem \ref{th03}.
\begin{lemma}\label{lm201}
 Suppose that  there are constants $T>0$,  $N\in\mathbb{N}^+$,  $C>0$, and  $\delta\in(0,1)$ such that the observability inequality (\ref{e107}) holds.
 Then, for each $y_0\in Y$, there is a control $u^{y_0}(\cdot)\in L^2_T(0,NT; U)$ such that
 the inequality (\ref{e201}), with the above $\delta$ and  $C(T, N, \delta)=T^{-1}C$, is true.
\end{lemma}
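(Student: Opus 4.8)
The plan is to run a Hilbert Uniqueness Method (HUM) type duality argument: rewrite the observability inequality \eqref{e107} as a lower bound for a sampled controllability operator, invert a coercive symmetric form to produce $u^{y_0}$, and close the estimate by absorption. First I would set $Z:=L^2_T(0,NT;U)$ and introduce the sampled control-to-final-state map
\[
\mathcal{G}_N:Z\to Y,\qquad \mathcal{G}_N u:=\int_0^{NT}S(NT-s)Bu(s)\,ds=\sum_{i=1}^{N}\Big(\int_{(i-1)T}^{iT}S(NT-s)\,ds\Big)Bu_i ,
\]
where $u(\cdot)=\sum_{i=1}^{N}\chi_{[(i-1)T,iT)}(\cdot)u_i$; then $\mathcal{G}_N\in\mathcal{L}(Z;Y)$ and $y(NT;y_0,u)=S(NT)y_0+\mathcal{G}_N u$ for all $y_0\in Y$. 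Since $Z$ consists of $U$-valued step functions adapted to the grid $\{(i-1)T\}_{i=1}^{N}$, the adjoint $\mathcal{G}_N^*\colon Y\to Z$ is obtained from $s\mapsto B^*S(NT-s)^*\varphi$ by averaging over each interval $[(i-1)T,iT)$; a change of variable $s\mapsto NT-s$ together with the reindexing $i\mapsto N-i+1$ identifies $\|\mathcal{G}_N^*\varphi\|_Z^2$ with a fixed multiple of $\sum_{i=1}^{N}\big\|\int_{(i-1)T}^{iT}B^*S(t)^*\varphi\,dt\big\|_U^2$, so that \eqref{e107} is equivalent to a weak observability inequality
\[
\|S(NT)^*\varphi\|_Y^2\leq a\,\|\mathcal{G}_N^*\varphi\|_Z^2+\delta\|\varphi\|_Y^2\qquad(\varphi\in Y),
\]
with $a=C(T,N,\delta)$ the constant asserted in the lemma (the factor involving $T$ coming from the normalization built into \eqref{yu-6-21-b-1}).

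Next, fix $y_0\in Y$. The operator $a\,\mathcal{G}_N\mathcal{G}_N^*+\delta I\in\mathcal{L}(Y)$ is self-adjoint and bounded below by $\delta I>0$, hence boundedly invertible, and I would set
\[
\psi:=(a\,\mathcal{G}_N\mathcal{G}_N^*+\delta I)^{-1}S(NT)y_0\in Y,\qquad u^{y_0}:=-a\,\mathcal{G}_N^*\psi\in Z .
\]
(Equivalently, $u^{y_0}$ is the minimizer over $Z$ of $u\mapsto\frac1{2a}\|u\|_Z^2+\frac1{2\delta}\|y(NT;y_0,u)\|_Y^2$ and $\psi=\frac1\delta\,y(NT;y_0,u^{y_0})$ is the associated adjoint state.) By construction $u^{y_0}\in L^2_T(0,NT;U)$, and since $y(NT;y_0,u^{y_0})=S(NT)y_0-a\,\mathcal{G}_N\mathcal{G}_N^*\psi=\delta\psi$ we obtain the identity
\[
\tfrac1a\|u^{y_0}\|_Z^2+\tfrac1\delta\|y(NT;y_0,u^{y_0})\|_Y^2=a\|\mathcal{G}_N^*\psi\|_Z^2+\delta\|\psi\|_Y^2 .
\]

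Finally, pairing $(a\,\mathcal{G}_N\mathcal{G}_N^*+\delta I)\psi=S(NT)y_0$ with $\psi$ gives $a\|\mathcal{G}_N^*\psi\|_Z^2+\delta\|\psi\|_Y^2=\langle y_0,S(NT)^*\psi\rangle_Y\leq\|y_0\|_Y\,\|S(NT)^*\psi\|_Y$, while the reformulated observability inequality applied at $\varphi=\psi$ gives $\|S(NT)^*\psi\|_Y^2\leq a\|\mathcal{G}_N^*\psi\|_Z^2+\delta\|\psi\|_Y^2$. Writing $X:=(a\|\mathcal{G}_N^*\psi\|_Z^2+\delta\|\psi\|_Y^2)^{1/2}$, these two facts give $X^2\leq\|y_0\|_Y X$, hence $X^2\leq\|y_0\|_Y^2$; combined with the identity of the previous step this is precisely \eqref{e201}. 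I expect the two delicate points to be the change-of-variables and normalization bookkeeping that turns \eqref{e107} into the clean operator inequality with exactly the stated constant, and the absorption step at the end, where the observability inequality is used in the direction opposite to the one in which it is stated and then combined with Cauchy--Schwarz to close the bound; the invertibility of $a\,\mathcal{G}_N\mathcal{G}_N^*+\delta I$, the membership $u^{y_0}\in Z$ (automatic since $\mathcal{G}_N^*$ maps into $Z$), and the semigroup manipulations are routine.
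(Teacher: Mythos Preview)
Your approach is correct and is in substance the same as the paper's: both rest on the standard duality that turns a weak observability inequality into a controllability-type estimate. The paper applies this duality as a black box via \cite[Lemma~5.1]{Wang-19}, introducing the operators $\mathcal{R}\varphi=S(NT)^*\varphi$ and $\mathcal{O}\varphi=\sum_{i=1}^N\chi_{[(i-1)T,iT)}\int_{(i-1)T}^{iT}B^*S(NT-s)^*\varphi\,ds$ and then identifying $\mathcal{R}^*y_0+\mathcal{O}^*u^{y_0}$ with $y(NT;y_0,u^{y_0})$; you instead unpack that lemma explicitly via the HUM construction $u^{y_0}=-a\,\mathcal{G}_N^*(a\,\mathcal{G}_N\mathcal{G}_N^*+\delta I)^{-1}S(NT)y_0$ and the absorption step $X^2\le\|y_0\|_Y\,X$. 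The invertibility of $a\,\mathcal{G}_N\mathcal{G}_N^*+\delta I$, the membership $u^{y_0}\in Z$, the key identity $\tfrac1a\|u^{y_0}\|_Z^2+\tfrac1\delta\|y(NT;y_0,u^{y_0})\|_Y^2=a\|\mathcal{G}_N^*\psi\|_Z^2+\delta\|\psi\|_Y^2$, and the closing estimate are all correct.

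One bookkeeping point you yourself flagged as delicate: with the $L^2(0,NT;U)$-norm on $Z=L^2_T(0,NT;U)$ the adjoint computation gives
\[
\|\mathcal{G}_N^*\varphi\|_Z^2=\frac{1}{T}\sum_{i=1}^N\Big\|\int_{(i-1)T}^{iT}B^*S(t)^*\varphi\,dt\Big\|_U^2,
\]
so \eqref{e107} becomes $\|S(NT)^*\varphi\|_Y^2\le CT\,\|\mathcal{G}_N^*\varphi\|_Z^2+\delta\|\varphi\|_Y^2$, i.e.\ $a=CT$ rather than $a=T^{-1}C$. Recheck this normalization carefully; in any case the precise value of $C(T,N,\delta)$ is immaterial for the downstream uses in Proposition~\ref{lm202} and Theorem~\ref{th03}, where only its finiteness matters.
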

\begin{proof}
We will use \cite[Lemma 5.1]{Wang-19} to prove this lemma. For this purpose, we define operators $\mathcal{R}: Y\rightarrow Y$ and $\mathcal{O}: Y\rightarrow L^2_T(0,NT; U)$ by
\begin{equation*}
    \mathcal{R} z:=S(NT)^*z;\;\; \mathcal{O}z:=\sum_{i=1}^{N}\chi_{[(i-1)T, iT)}(t)\int_{(i-1)T}^{iT}B^*S(NT-s)^*zds,\;\;z\in Y.
\end{equation*}
Thus, the inequality (\ref{e107}) is the same as
\begin{equation*}
\|\mathcal{R} z\|_Y^2\leq T^{-1}C\|\mathcal{O}z\|_{L^2_T(0,NT;U)}^2+\delta \|z\|_Y^2.
\end{equation*}
Then, according to \cite[Lemma 5.1]{Wang-19}, for each $y_0\in Y$, there is an $u^{y_0}(\cdot)\in L^2_T(0,NT;U)$ such that
\begin{equation*}
 \frac{1}{T^{-1}C}\|-u^{y_0}\|_{L^2_T(0,NT;U)}^2+\frac{1}{\delta}\|\mathcal{R}^*y_0
-\mathcal{O}^*(-u^{y_0})\|_{Y}^2\leq  \|y_0\|_Y^2.
\end{equation*}
Meanwhile, by a simple calculation, we obtain $\mathcal{R}^*y_0-\mathcal{O}^*(-u^{y_0})=y(NT; y_0, u^{y_0})$. This, together with the above inequality, leads to (\ref{e201}), with the above $\delta$ and  $C(T, N, \delta)=T^{-1}C$. Hence, we have completed the proof of Lemma \ref{lm201}.
\end{proof}
We are now in a position to prove Theorem \ref{th03}.
\begin{proof}[Proof of Theorem \ref{th03}]
We arbitrarily fix $T>0$ and  organize the proof in two steps.
\vskip 5pt
  \noindent  \emph{Step 1. We prove  $(i)\Rightarrow (ii)$.}

    Suppose that $(i)$ holds, i.e., there is an $F\in \mathcal{L}(Y;U)$, an $\omega>0$,
    and a $C(\omega)\geq 1$ such that for an arbitrarily fixed  $y_0\in Y$, the solution $y_F(\cdot)$ to the
    system  \eqref{yu-6-18-2} with the initial condition $y(0)=y_0$,
satisfies
\begin{equation}\label{yu-6-8-2}
    \|y_F(t)\|_Y\leq C(\omega)e^{-\omega t}\|y_0\|_Y,\;\;\;t\in\mathbb{R}^+.
\end{equation}
   Let
\begin{equation}\label{yu-6-8-3}
    u_F(t):=F(H_Ty_F)(t)=\sum_{i=1}^{\infty}\chi_{[(i-1)T, iT)}(t)Fy_F((i-1)T),\;\;\;t\in\mathbb{R}^+,
\end{equation}
    where $H_T$ is given by  (\ref{yu-6-25-2}). We arbitrarily fix $N\in \mathbb{N}^+$. It follows from (\ref{yu-6-8-2})
    and \eqref{yu-6-8-3} that $u_F\in L^2_T(\mathbb{R}^+;U)$ and
\begin{equation*}\label{yu-6-8-4}
    y_F(NT)=S(NT)y_0+\int_0^{NT}S(NT-t)Bu_F(t)dt,
\end{equation*}
which leads to
  \begin{eqnarray}\label{e225}
\langle  y_F(NT), \varphi\rangle_Y=\langle  y_0, S(NT)^*\varphi\rangle_Y+\int_0^{NT}\langle  u_F(t), B^*S(NT-t)^*\varphi\rangle_U dt\;\;\mbox{for any}\;\;
\varphi\in Y.
 \end{eqnarray}
   Meanwhile, it follows from (\ref{yu-6-8-3}) and (\ref{yu-6-8-2}) that, for any $\varphi\in Y$,
  \begin{eqnarray*}\label{e226}
&\;&\Big|\int_0^{NT}\langle u_F(t), B^*S(NT-t)^*\varphi\rangle_U dt\Big|\nonumber\\
&=&\Big|\sum_{i=1}^{N}\Big\langle Fy_F((i-1)T), \int_{(i-1)T}^{iT}B^*S(NT-t)^*\varphi dt\Big\rangle_U\Big|\nonumber\\
&\leq&C(\omega)\|F\|_{\mathcal{L}(Y;U)}\Big(\sum_{i=0}^{N-1}e^{-2\omega iT}\Big)^{1/2}\Big(\sum_{i=1}^{N}\Big\|\int_{(i-1)T}^{iT}B^*S(NT-t)^*\varphi dt\Big\|_U^2\Big)^{1/2}\|y_0\|_Y\nonumber\\
&\leq& C(F,\omega)\Big(\sum_{i=1}^{N}\Big\|\int_{(i-1)T}^{iT}B^*S(NT-t)^*\varphi dt\Big\|_U^2\Big)^{1/2}\|y_0\|_Y,
 \end{eqnarray*}
    where $C(F,\omega):=C(\omega)(1-e^{-2\omega T})^{-\frac{1}{2}}\|F\|_{\mathcal{L}(Y;U)}$.
This, along with (\ref{e225}) and (\ref{yu-6-8-2}), yields that for any $\varphi\in Y$,
\begin{eqnarray*}\label{e227}
&\;&\Big|\langle  y_0, S(NT)^*\varphi\rangle_Y\Big|\nonumber\\
&\leq& C(F,\omega)\Big(\sum_{i=1}^{N}\Big\|\int_{(i-1)T}^{iT}B^*S(NT-t)^*\varphi dt\Big\|_U^2\Big)^{1/2}\|y_0\|_Y+C(\omega)e^{-\omega NT} \|y_0\|_Y\|\varphi\|_Y.
 \end{eqnarray*}
   Since $y_0\in Y$ and $N\in\mathbb{N}^+$ were arbitrarily selected, the above  leads to  (\ref{e107-b}), with $C_1=2(C(F,\omega))^2$ and $C_2:=2(C(\omega))^2$. Hence, $(ii)$ is true.

\vskip 5pt
  \noindent  \emph{Step 2. We prove $(ii)\Rightarrow (iii)$.}

   Suppose that $(ii)$ holds, i.e., there are constants  $\omega>0$,  $C_1>0$, and $C_2>0$ such that (\ref{e107-b}) holds for all $N\in\mathbb{N}^+$. Let $N^*\in\mathbb{N}^+$ be such that $C_2e^{-\omega N^*T}<1$. Then, (\ref{e107-b}) with $N=N^*$ leads to
    (\ref{e107}) with $N:=N^*$, $C:=C_1$, and $\delta:= C_2e^{-\omega N^*T}$.
   Hence,
    $(iii)$ is true.
\vskip 5pt
  \noindent   \emph{Step 3. We prove $(iii)\Rightarrow (i)$.}

   Suppose that $(iii)$ holds, i.e., there are constants $N\in \mathbb{N}^+$, $\delta\in (0,1)$, and $C>0$ such that the inequality (\ref{e107}) holds. Then, it follows from Lemmas \ref{lm201} and \ref{yu-lemma-6-1-1} and Proposition \ref{lm202} that
\begin{equation*}\label{e218}
\sup_{n\in\mathbb{N}^+}\inf_{u\in l^2(\mathbb{N}^+;U)}J_n(u;y_0)<\infty\;\;\mbox{for any}\;\;y_0\in Y,
\end{equation*}
where $J_n(\cdot;\cdot)$ is given by (\ref{yu-6-1-3}). This, together with \cite[Theorem 6.1]{Zabczyk-74}, implies that there exists a symmetric operator $K\in \mathcal{L}(Y)$ satisfying $\langle Ky,y\rangle_Y\geq 0$ for any $y\in Y$ and \begin{equation*}\label{e219}
K=\Phi^*K(I+DD^*K)^{-1}\Phi+I,
\end{equation*}
    where $\Phi$ and $D$ are given by \eqref{2.2wang7-11}.
             Moreover,  from  \cite[Theorem 6.2]{Zabczyk-74}, as well as its proof, we obtain that the operator
\begin{equation}\label{e22-1-1}
    F_K:=-(I+D^*K D)^{-1}D^*K\Phi\in \mathcal{L}(Y; U)
\end{equation}
     satisfies
\begin{equation}\label{e220}
\verb"r"(\Phi+DF_K)<1,
\end{equation}
where $\verb"r"(\Phi+DF_K)$ is the spectral radius of $\Phi+DF_K$. Define $r_0:=(1+\verb"r"(\Phi+DF_K))/2$.
Then, it follows from (\ref{e220}) that
\begin{equation}\label{yu-6-6-6}
\verb"r"(\Phi+DF_K)<r_0<1.
\end{equation}
\par
    Next, we arbitrarily fix $y_0\in Y$ and then consider the system (\ref{e202}) with $u_i:=F_Ky_{i-1}$ and the initial datum $y_0$. It is clear that
        \begin{equation}\label{yu-6-6-7}
    y_i=(\Phi+DF_K)y_{i-1}\;\;\mbox{for each}\;\;i\in\mathbb{N}^+.
\end{equation}
   Since  $\verb"r"(\Phi+DF_K))=\lim_{n\to+\infty}\|(\Phi+DF_K)^n\|_{\mathcal{L}(Y)}^{\frac{1}{n}}$, it follows
   from  \eqref{yu-6-6-7} and (\ref{yu-6-6-6})  that there is an $n^*:=n^*(r_0)\in\mathbb{N}^+$ such that
\begin{equation}\label{yu-6-6-8}
    \|y_i\|_Y=\|(\Phi+DF_K)^iy_0\|_Y\leq r_0^{i}\|y_0\|_{Y}\;\;\mbox{for any}\;\;i\geq n^*.
\end{equation}
   From \eqref{yu-6-6-8}, we see that $(y_i)_{i\in\mathbb{N}^+}\in l^2(\mathbb{N}^+;Y)$,
   which, together with the facts that
       $F_K\in \mathcal{L}(Y;U)$
       and $u_i=F_Ky_{i-1}$, yields
           $(u_i)_{i\in\mathbb{N}^+}\in l^2(\mathbb{N}^+;U)$.
\par
   We are now in a position to show  $(i)$, i.e., the system (\ref{yu-5-26-1}) is \textbf{(DC)}$_T$-stabilizable.
    It suffices to prove
     that any solution
     $y_{F_K}(\cdot)$ to the closed-loop system
\begin{equation}\label{yu-6-7-1}
    y'(t)=Ay(t)+BF_K(H_Ty)(t),\;\; t\in\mathbb{R}^+
\end{equation}
 is exponentially stable, where $F_K$ is given by (\ref{e22-1-1}).
  Indeed, by \eqref{yu-6-7-1} and
   (\ref{yu-6-25-2}), one can easily check that
\begin{equation*}\label{yu-6-7-2}
    y_{F_K}(iT)=\Big(S(T)+\int_0^TS(T-s)dsBF_K\Big) y_{F_K}((i-1)T)
    \;\;\mbox{for each}\;\;i\in\mathbb{N}^+.
\end{equation*}
    This, along with \eqref{2.2wang7-11}, (\ref{yu-6-6-7}), and (\ref{yu-6-6-8}), leads to
\begin{equation}\label{yu-6-7-3}
    \|y_{F_K}(iT)\|_Y\leq r_0^{i}\|y_{F_K}(0)\|_Y\;\;\mbox{for each}\;\;i\geq n^*.
\end{equation}
     Meanwhile, for an arbitrarily fixed $m^*\in \mathbb{N}$, we can directly check  that when $t\in[m^*T,(m^*+1)T)$, \begin{eqnarray}\label{yu-6-7-4}
    \|y_{F_K}(t)\|_Y=\Big\|\Big(S(t-m^*T)+\int_{m^*T}^tS(t-s)BF_Kds\Big) y_{F_K}(m^*T)\Big\|_Y
    \leq C(K)\|y_{F_K}(m^*T)\|_Y,
\end{eqnarray}
    where
    $C(K):=(1+T\|B\|_{\mathcal{L}(U;Y)}\|F_K\|_{\mathcal{L}(Y;U)})\sup_{t\in [0,T]}\|S(t)\|_{\mathcal{L}(Y)}$.
    By \eqref{yu-6-7-4} and  (\ref{yu-6-7-3}), we obtain  that for any $t\in[m^*T,(m^*+1)T)$,
\begin{equation}\label{yu-b-6-25-1}
    \|y_{F_K}(t)\|_Y\leq
\begin{cases}
    (C(K))^{n^*}e^{\omega_Kn^*T}e^{-\omega_Kt}\|y_{F_K}(0)\|_Y,\;\;\mbox{if}\;\;0\leq m^*<n^*,\\
    C(K)r_0^{-1}e^{-\omega_Kt}\|y_{F_K}(0)\|_Y,\;\;\mbox{if}\;\;m^*\geq n^*,
\end{cases}
\end{equation}
   where $\omega_K:=-T^{-1}\ln r_0$ is positive because of (\ref{yu-6-6-6}).

   Finally, it follows from \eqref{yu-b-6-25-1} that the system \eqref{yu-6-7-1}
      is exponentially stable. Thus,
      $(i)$ is true.
\par
   Hence, we have completed the proof of Theorem \ref{th03}.
\end{proof}

\subsection{Proof of Theorem  \ref{th03-1}}
We organized the proof in several steps.

\vskip 5pt
   \noindent \emph{Step 1. It is trivial that $(i)\Rightarrow (ii)$.}
\vskip 5pt
   \noindent \emph{Step 2. We prove that $(ii)\Rightarrow (iii)$.}

    Suppose that $(ii)$ holds, i.e., there is a $T>0$, an $\omega>0$, a $C(\omega, T)>0$, and an $F(\cdot)\in \mathcal{F}_{T,\infty}$ such that for an arbitrarily fixed $y_0\in Y$, the solution $y_F(\cdot)$ to the
    equation \eqref{yu-6-21-1}, with the initial condition $y(0)=y_0$,
    satisfies
\begin{equation}\label{yu-6-15-9}
    \|y_F(t)\|_Y\leq C(\omega,T)e^{-\omega t}\|y_0\|_Y, \;\;t\in\mathbb{R}^+.
\end{equation}
   Let
\begin{equation}\label{yu-6-15-10}
    u_F(t):=F(t)(H_Ty_F)(t)\;\;\mbox{a.e.}\;\;t\in\mathbb{R}^+.
\end{equation}
Several facts are stated as follows. First,
    it follows from (\ref{yu-6-25-2}) that
    $u_F(t)=F(t)y_F([t/T]T),\; t\in \mathbb{R}^+$. Second, since $F(\cdot)\in \mathcal{F}_{T,\infty}$, it follows from (\ref{yu-6-15-9}) that
\begin{eqnarray}\label{yu-6-15-11}
    \int_0^{+\infty}\|u_F(t)\|_U^2dt&=&\sum_{i=1}^{+\infty}
    \int_{(i-1)T}^{iT}\|F(t)y_F((i-1)T)\|^2_Udt\nonumber\\
    &\leq&(C(\omega,T))^2\int_{0}^{T}
    \|F(t)\|^2_{\mathcal{L}(Y;U)}dt\sum_{i=1}^{+\infty}e^{-2(i-1)\omega T}\|y_0\|_Y^2\nonumber\\
    &=& \widetilde{C}(\omega,T,F)\|y_0\|_Y^2,
\end{eqnarray}
   where $\widetilde{C}(\omega,T,F):=T(1-e^{-2\omega T})^{-1}(C(\omega, T))^{2} \|F(\cdot)\|^2_{L^\infty(0, T; \mathcal{L}(Y;U))}$. Thus,  $u_F\in L^2(\mathbb{R}^+;U)$.
   Third, it follows from \eqref{yu-6-21-1} and (\ref{yu-6-15-10}) that
\begin{equation}\label{yu-6-15-12}
    y_F(t)=S(t)y_0+\int_0^tS(t-s)Bu_{F}(s)ds\;\;\mbox{for any}\;\;t\in\mathbb{R}^+.
\end{equation}

    Now, by (\ref{yu-6-15-9}), (\ref{yu-6-15-11}), and
     (\ref{yu-6-15-12}), we have that when $\hat{N}\in\mathbb{N}^+$  and  $\varphi\in Y$,
\begin{eqnarray}\label{yu-6-15-137-12}
    &\;&|\langle y_0, S(\hat{N}T)^*\varphi\rangle_Y|\nonumber\\
    &\leq & |\langle y_F(\hat{N}T), \varphi\rangle_Y|+\Big|\int_0^{\hat{N}T}\langle u_{F}(s), B^*S(\hat{N}T-s)^*\varphi\rangle_Uds\Big|\nonumber\\
 &\leq& \|y_F(\hat{N}T)\|_Y\|\varphi\|_Y+\Big(\int_0^{\hat{N}T}\|u_{F}(s)\|_U^2ds\Big)
 ^{\frac{1}{2}}\Big(\int_0^{\hat{N}T}\|B^*S(\hat{N}T-s)^*
 \varphi\|_U^2ds\Big)^{\frac{1}{2}}\nonumber\\
 &\leq& C(\omega, T)e^{-\hat{N}\omega T} \|y_0\|_Y\|\varphi\|_Y+\sqrt{\widetilde{C}(\omega, T,F)}\|y_0\|_Y\Big(\int_0^{\hat{N}T}\|B^*S(\hat{N}T-s)^*\varphi\|_U^2ds\Big)^{\frac{1}{2}}.
\end{eqnarray}
Noting that $y_0\in Y$ was arbitrarily taken, by taking $\hat{N}\in\mathbb{N}^+$ such that
\begin{equation*}\label{yu-6-15-bb-1}
 \hat{\delta}:=2(C(\omega, T))^2e^{-2\hat{N}\omega T}\in (0,1)
\end{equation*}
in \eqref{yu-6-15-137-12},
we obtain
   \begin{equation*}\label{yu-6-15-14}
    \|S(\hat{N}T)^*\varphi\|^2_Y\leq 2\widetilde{C}(\omega, T,F)\int_0^{\hat{N}T}\|B^*S(\hat{N}T-s)^*\varphi\|_U^2ds+ \hat{\delta} \|\varphi\|_Y^2\;\;\mbox{for any}\;\;\varphi\in Y.
\end{equation*}
   This implies that the inequality (\ref{yu-6-18-3}) holds with $T = \hat{N}T, C = 2\widetilde{C}(\omega, T,F)$ and $\delta = \hat{\delta}$. Thus,
    we  conclude that $(iii)$ holds.

    \vskip 5pt
\noindent \emph{Step 3. We prove $(iii)\Rightarrow (i)$.}

 Suppose that $(iii)$ holds. We know by \cite[Theorem 1]{Trelat-20} that the system (\ref{yu-5-26-1}) is \textbf{(CC)}-stabilizable, i.e.,  there is an $\omega>0$, a $C(\omega)>0$, and an $F\in \mathcal{L}(Y; U)$  such that  for each  $y_0\in Y$, the  solution $y_F(\cdot;y_0)$ to the equation \eqref{yu-6-18-17-12}
 with the initial condition $y(0)=y_0$
satisfies
\begin{equation}\label{yu-6-14-1}
  \|y_F(t;y_0)\|_Y\leq C(\omega)e^{-\omega t}\|y_0\|_Y\;\;\mbox{for any}\;\;t\in\mathbb{R}^+.
\end{equation}
  We arbitrarily fix $y_0\in Y$.  Several facts are stated as follows. First, because $BF\in\mathcal{L}(Y)$, the operator $A+BF$ with
     the domain $D(A+BF)=D(A)$ generates a $C_0$-semigroup $S_F(\cdot)$ on $Y$
      (\cite[Theorem 1.1, Chapter 3]{Pazy}). This, together with (\ref{yu-6-14-1}), yields
\begin{equation}\label{yu-6-15-1}
    \|S_F(t)\|_{\mathcal{L}(Y)}\leq C(\omega)e^{-\omega t},\;\;\;t\in\mathbb{R}^+.
\end{equation}
   Second, it is clear that
\begin{equation}\label{yu-6-15-2}
    y_F(t;y_0)=S_F(t)y_0=S(t)y_0+\int_0^tS(t-s)BFy_F(s;y_0)ds,\;\;\;t\in\mathbb{R}^+.
\end{equation}
  Third, for an arbitrarily fixed  $T>0$, we define $F_T(\cdot): \mathbb{R}^+\rightarrow \mathcal{L}(Y; U)$ by
\begin{equation}\label{e232}
F_T(t):=FS_F(t-[t/T]T),\;\; t\in\mathbb{R}^+.
\end{equation}
   Then, we have $F_T(\cdot)\in \mathcal{F}_{T,\infty}$, because  $F\in\mathcal{L}(Y;U)$.

    We claim the exponential stability of the following system:
\begin{equation}\label{yu-6-14-2}
    z'(t)=Az(t)+BF_T(t)(H_Tz)(t),\;\; t\in\mathbb{R}^+,
\end{equation}
  where $H_T$ is given by (\ref{yu-6-25-2}).

  When the above claim is proven, we can use it, as well as the facts that $T$
  was arbitrarily taken and $F_T(\cdot)\in \mathcal{F}_{T,\infty}$, to
  conclude that the system (\ref{yu-5-26-1}) is \textbf{(DP)}$_{T}$-stabilizable for any $T>0$, i.e., $(ii)$ is true.

   The remaining task is to show the above claim. First,
    we   write  $z_{F_T}(\cdot;y_0)\in C([0,+\infty);Y)$ for the solution to
  the equation \eqref{yu-6-14-2} with the initial condition $z(0)=y_0$.

 Next, we are in a position to prove that for all $k\in\mathbb{N}$,
\begin{equation}\label{yu-6-14-3}
    z_{F_T}(kT;y_0)=y_F(kT;y_0).
\end{equation}
   This will be proven by mathematical induction. Since \eqref{yu-6-14-3} with $k=0$ is true, we only need to show \eqref{yu-6-14-3} with $k\in \mathbb{N}^+$.
   To prove \eqref{yu-6-14-3} with $k=1$, we use  (\ref{yu-6-25-2}), (\ref{yu-6-14-2}), (\ref{e232}), and (\ref{yu-6-15-2}) to show that when $t\in [0,T)$,
\begin{eqnarray}\label{yu-6-14-4}
    z_{F_T}(t; y_0)&=&S(t)y_0+\int_{0}^{t}S(t-s)BF_T(s)y_0ds\nonumber\\
&=&S(t)y_0+\int_{0}^{t}S(t-s)BFS_F(s)y_0ds\nonumber\\
&=&S(t)y_0+\int_{0}^{t}S(t-s)BFy_F(s;y_0)ds
= y_F(t; y_0).
\end{eqnarray}
         Because  $z_{F_T}(\cdot;y_0)$ and  $y_F(\cdot; y_0)$ are continuous over $[0,+\infty)$, we
         obtain from \eqref{yu-6-14-4} that  $z_{F_T}(T;y_0)=y_F(T;y_0)$, which leads to (\ref{yu-6-14-3})  with $k=1$.
                  We suppose by induction that $z_{F_T}(k_0T; y_0)=y_F(k_0T; y_0)$ for some   $k_0\in\mathbb{N}^+$. Then, by a similar argument to that used to prove (\ref{yu-6-14-4}), we obtain
\begin{equation*}\label{yu-6-15-3}
    z_{F_T}(t;z_{F_T}(k_0T;y_0))=y_F(t;y_F(k_0T;y_0)),\;\;\;t\in[0, T).
\end{equation*}
    Since $z_{F_T}(\cdot;y_0)$ and $y_F(\cdot; y_0)$ are continuous over $\mathbb{R}^+$, the above,
    along with the assumption that $z_{F_T}(k_0T; y_0)=y_F(k_0T; y_0)$, implies that
\begin{equation}\label{yu-6-15-4}
    z_{F_T}(T;z_{F_T}(k_0T;y_0))=y_F(T;y_F(k_0T;y_0)).
\end{equation}
    However, it is clear that
\begin{equation*}\label{yu-6-15-5}
    z_{F_T}(T;z_{F_T}(k_0T;y_0))=z_{F_T}((k_0+1)T;y_0)\;\;\mbox{and}\;\;
    y_F(T;y_F(k_0T;y_0))=y_F((k_0+1)T;y_0).
\end{equation*}
    These, together with (\ref{yu-6-15-4}), yields $z_{F_T}((k_0+1)T;y_0)=y_F((k_0+1)T;y_0)$,
    i.e., (\ref{yu-6-14-3}) with $k=k_0+1$ is true.
    Thus, by mathematical induction, we conclude that (\ref{yu-6-14-3}) is true for all $k\in\mathbb{N}$.

Now, it follows from  (\ref{yu-6-15-1}), the first equality in (\ref{yu-6-15-2}), and (\ref{yu-6-14-3})
   that
\begin{equation}\label{yu-6-15-6}
    \|z_{F_T}(kT;y_0)\|_Y\leq C(\omega)e^{-k\omega T}\|y_0\|_Y\;\;\mbox{for each}\;\;k\in\mathbb{N}.
\end{equation}
  We arbitrarily fix $t\in\mathbb{R}^+$. Then, there is a $j^*\in\mathbb{N}$ such that $t\in[j^*T,(j^*+1)T)$. This, along with (\ref{yu-6-15-1}) and (\ref{yu-6-15-6}), yields
\begin{eqnarray*}\label{yu-6-15-7}
    \|z_{F_T}(t;y_0)\|_Y&=&\|z_{F_T}(t-j^*T;z_{F_T}(j^*T;y_0))\|_Y=
    \|S_F(t-j^*T)z_{F_T}(j^*T;y_0)\|_Y\nonumber\\
    &\leq&(C(\omega))^2e^{-j^*\omega T}\|y_0\|_Y\leq \widehat{C}(\omega,T)e^{-\omega t}\|y_0\|_Y,
\end{eqnarray*}
    where $\widehat{C}(\omega,T):=(C(\omega))^2e^{\omega T}$.
     Since $t\in \mathbb{R}^+$ and $y_0\in Y$ were arbitrarily taken, the above leads to the exponential stability of
      the  system (\ref{yu-6-14-2}).

\par
    Thus, we have completed the proof of Theorem  \ref{th03-1}.

\subsection{Proof of Theorem \ref{thm1.11-7-26}}

We organize the proof in several steps.

\vskip 5pt
   \noindent \emph{Step 1. We prove  $(i)$.}

    Arbitrarily fix a  $T>0$, and suppose that the system (\ref{yu-5-26-1}) is \textbf{(DC)}$_T$-stabilizable. Then,
 according to Theorem \ref{th03}, there are constants $N\in \mathbb{N}^+$, $\delta\in (0,1)$, and $C\geq 0$ such that the inequality (\ref{e107}) holds.
 Meanwhile, by  H\"{o}lder's inequality, we find that
\begin{eqnarray}\label{e235}
\sum_{i=1}^{N}\Big\|\int_{(i-1)T}^{iT}B^*S(t)^*\varphi dt\Big\|_U^2
\leq T \sum_{i=1}^{N}\int_{(i-1)T}^{iT}\|B^*S(t)^*\varphi\|_U^2 dt
=T\int_0^{NT}\|B^*S(t)^*\varphi\|_U^2 dt,\;\;\varphi\in Y.
\end{eqnarray}
Then, it follows from  (\ref{e107}) and (\ref{e235}) that
\begin{equation*}\label{e236}
\|S(NT)^*\varphi\|_Y^2\leq TC\int_0^{NT}\|B^*S(t)^*\varphi\|_U^2 dt+\delta \|\varphi\|_Y^2,\;\; \varphi\in Y.
\end{equation*}
Since $\delta\in(0,1)$,
the above inequality, together with \cite[Theorem 1]{Trelat-20},
yields that
 the system (\ref{yu-5-26-1}) is  \textbf{(CC)}-stabilizable.

 To prove the reverse is not true, it suffices to provides an example that is \textbf{(CC)}-stabilizable but not \textbf{(DC)}$_T$-stabilizable for any $T>0$. In fact, Example 3 in Subsection \ref{yu-sec-6-22-1} is
 such an example (see Theorem \ref{th301-b}).

\vskip 5pt
   \noindent \emph{Step 2. $(ii)$ follows from Theorem \ref{th03-1} and \cite[Theorem 1]{Trelat-20}
   immediately.}

\vskip 5pt
   \noindent \emph{Step 3. We prove  $(iii)$.}

   It is clear that the \textbf{(CC)}-stabilizability implies the  \textbf{(CP)}$_T$-stabilizability for all $T>0$. Thus, we only
need to show that the \textbf{(CP)}$_T$-stabilizability for some $T>0$ implies the \textbf{(CC)}-stabilizability.
To this end, we  suppose that (\ref{yu-5-26-1}) is \textbf{(CP)}$_T$-stabilizable for some $T>0$,
 i.e., there is an $F(\cdot)\in \mathcal{F}_{T,\infty}$ such that for any $y_0\in Y$, the solution $y_F(\cdot,y_0)$ to the system (\ref{yu-6-28-1wang7-13}) with
     the initial condition $y(0)=y_0$ satisfies
\begin{equation}\label{yu-6-28-1}
    \|y_F(t;y_0)\|_Y\leq C(\omega)e^{-\omega t}\|y_0\|_Y,\;\;\;t\in\mathbb{R}^+,
\end{equation}
    where $\omega>0$ and $C(\omega)>0$. Meanwhile, it is clear that
\begin{equation*}\label{yu-6-28-3}
    y_F(t;y_0)=S(t)y_0+\int_0^tS(t-s)BF(s)y_F(s)ds,\;\;\;t\in\mathbb{R}^+,
\end{equation*}
    which yields that when  $T>0$ and  $\varphi\in Y$,
\begin{equation*}\label{yu-6-28-4}
    \langle y_F(T;y_0), \varphi\rangle_Y=
    \langle y_0, S(T)^*\varphi\rangle_Y+\int_0^T\langle F(s)y_F(s;y_0), B^*S(T-s)^*\varphi
    \rangle_Uds.
\end{equation*}
   This, along with (\ref{yu-6-28-1}), implies that when  $T>0$ and  $\varphi\in Y$,
\begin{eqnarray}\label{yu-6-28-5}
    &\;&|\langle y_0, S(T)^*\varphi\rangle_Y|\nonumber\\
    &\leq&
    \Big|\int_0^T\langle F(s)y_F(s;y_0), B^*S(T-s)^*\varphi
    \rangle_Uds\Big|+|\langle y_F(T;y_0), \varphi\rangle_Y|\nonumber\\
    &\leq& C(\omega, T, F)
    \|y_0\|\|B^*S(T-\cdot)^*\varphi\|_{L^2(0,T; U)}
    +C(\omega)e^{-\omega T}\|y_0\|_Y\|\varphi\|_Y,
\end{eqnarray}
    where $C(\omega, T, F):=C(\omega)\omega^{-1}\|F(\cdot)\|_{L^\infty(0,T;\mathcal{L}(Y;U))}$.
    Let $\widehat{T}>0$ satisfy $\hat{\delta}:=C(\omega)e^{-\omega \widehat{T}}\in (0,1)$.
    Since $y_0\in Y$ was arbitrarily taken, we can use (\ref{yu-6-28-5}) with $T=\widehat{T}$
    to obtain
      (\ref{yu-6-18-3}) with
    $T=\widehat{T}$, $\delta=\hat{\delta}$, and $C=C(\omega, \widehat{T}, F)$.
    This, along with \cite[Theorem 1]{Trelat-20},  leads to the   \textbf{(CC)}-stabilizability.

\vskip 5pt
   \noindent \emph{Step 4. The conclusion  $(iv)$ follows from $(i)$ and $(ii)$ at once.}

\vskip 5pt
   \noindent \emph{Step 5. The conclusion  $(v)$ follows from $(ii)$ and $(iii)$ at once.}

\par
    Thus, we have completed the proof of Theorem  \ref{thm1.11-7-26}.

\section{Applications}\label{yu-sec-6-22}

We present several applications of Theorem \ref{th03}. The first subsection gives some properties on the
\textbf{(DC)}$_T$-stabilizability, while the second subsection shows some examples that are concrete
control differential equations.

 \subsection{Some properties of the \textbf{(DC)}$_T$-stabilizability} \label{liu-7-22}

 \begin{theorem}\label{liu-proposition-7-2-1}
Let
\begin{equation*}\label{liu-7-2-01}
\mathcal{T}:=\{T\in \mathbb{R}^+:\ \mbox{the system}\ (\ref{yu-5-26-1}) \ \mathrm{is}\  \textbf{(DC)}_T-\mathrm{stabilizable}\}.
\end{equation*}
Then, $\mathcal{T}$ is an open subset of  $\mathbb{R}^+$.
\end{theorem}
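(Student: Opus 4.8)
The plan is to run everything through the quantitative criterion of Theorem~\ref{th03}: a period $T$ belongs to $\mathcal{T}$ exactly when the weak observability inequality \eqref{e107} holds for some $N\in\mathbb{N}^+$, $\delta\in(0,1)$, $C\ge 0$ and, equivalently, when the family \eqref{e107-b} holds for all $N\in\mathbb{N}^+$ with fixed $\omega>0$, $C_1\ge 0$, $C_2>0$. So fix $T_0\in\mathcal{T}$; then there are $\omega>0$, $C_1\ge 0$, $C_2>0$ with
\[
\|S(NT_0)^*\varphi\|_Y^2\le C_1\sum_{i=1}^{N}\Big\|\int_{(i-1)T_0}^{iT_0}B^*S(t)^*\varphi\,dt\Big\|_U^2+C_2e^{-\omega NT_0}\|\varphi\|_Y^2
\]
for every $N\in\mathbb{N}^+$ and $\varphi\in Y$. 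I will show that for all $T$ in a small neighbourhood of $T_0$ the inequality \eqref{e107} holds with a suitable fixed $N$, a constant $C$, and $\delta=\tfrac12$, so that $T\in\mathcal{T}$ by Theorem~\ref{th03}; this exhibits an open neighbourhood of $T_0$ contained in $\mathcal{T}$.

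Write $G_i(T):=\int_{(i-1)T}^{iT}B^*S(t)^*(\cdot)\,dt\in\mathcal{L}(Y;U)$. The routine ingredient is that, although $t\mapsto S(t)$ need not be norm continuous, each $T\mapsto G_i(T)$ is: since $\big\|\int_a^bS(t)^*\,dt\big\|_{\mathcal{L}(Y)}\le(b-a)\sup_{[a,b]}\|S(t)\|_{\mathcal{L}(Y)}$ and differences of such operator integrals telescope at the endpoints, one obtains, for $i\le N$ and $|T-T_0|<T_0/N$,
\[
\|G_i(T)-G_i(T_0)\|_{\mathcal{L}(Y;U)}\le 2i\,\|B\|_{\mathcal{L}(U;Y)}\,M_N\,|T-T_0|,\qquad M_N:=\sup_{0\le t\le(N+1)T_0}\|S(t)\|_{\mathcal{L}(Y)},
\]
so that $\eta_N(\varepsilon):=\sup_{|T-T_0|<\varepsilon}\max_{1\le i\le N}\|G_i(T)-G_i(T_0)\|_{\mathcal{L}(Y;U)}\to0$ as $\varepsilon\to0^+$, for each fixed $N$.

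The delicate ingredient is to control $\|S(NT)^*\varphi\|_Y$ by the $T_0$-data without norm continuity of the semigroup. For $|T-T_0|<\varepsilon$ with $\varepsilon<T_0/N$, put $K:=[NT/T_0]$. Then $K\in\{N-1,N\}$, $KT_0\le NT$, and $r:=NT-KT_0\in[0,T_0)$, so the semigroup property gives $S(NT)^*=S(r)^*S(KT_0)^*$, hence $\|S(NT)^*\varphi\|_Y\le M_{T_0}\|S(KT_0)^*\varphi\|_Y$ with $M_{T_0}:=\sup_{0\le s\le T_0}\|S(s)\|_{\mathcal{L}(Y)}$. Applying the displayed $T_0$-inequality at the index $K$ and using $N-1\le K\le N$,
\[
\|S(NT)^*\varphi\|_Y^2\le M_{T_0}^2C_1\sum_{i=1}^{N}\|G_i(T_0)\varphi\|_U^2+M_{T_0}^2C_2e^{-\omega(N-1)T_0}\|\varphi\|_Y^2,
\]
and then $\|G_i(T_0)\varphi\|_U^2\le2\|G_i(T)\varphi\|_U^2+2\,\eta_N(\varepsilon)^2\|\varphi\|_Y^2$ yields
\[
\|S(NT)^*\varphi\|_Y^2\le 2M_{T_0}^2C_1\sum_{i=1}^{N}\|G_i(T)\varphi\|_U^2+\big(2M_{T_0}^2C_1N\eta_N(\varepsilon)^2+M_{T_0}^2C_2e^{-\omega(N-1)T_0}\big)\|\varphi\|_Y^2.
\]

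It remains to fix the free parameters in the right order. First choose $N\in\mathbb{N}^+$ so large that $M_{T_0}^2C_2e^{-\omega(N-1)T_0}<\tfrac14$ — possible since $M_{T_0}$, $C_2$, $\omega$, $T_0$ do not depend on $N$; with this $N$ frozen, then choose $\varepsilon\in(0,T_0/N)$ so small that $2M_{T_0}^2C_1N\eta_N(\varepsilon)^2<\tfrac14$ — possible since $\eta_N(\varepsilon)\to0$ as $\varepsilon\to0^+$. For every $T$ with $|T-T_0|<\varepsilon$ the last display is then precisely \eqref{e107} with this $N$, with $C:=2M_{T_0}^2C_1$, and with $\delta:=\tfrac12\in(0,1)$, so $T\in\mathcal{T}$ by the implication $(iii)\Rightarrow(i)$ of Theorem~\ref{th03}; hence $(T_0-\varepsilon,T_0+\varepsilon)\subset\mathcal{T}$ and $\mathcal{T}$ is open. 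The only genuine obstacle is the absence of norm continuity of $t\mapsto S(t)$, which forbids passing to the limit $T\to T_0$ directly in $\|S(NT)^*\varphi\|_Y$; it is bypassed by invoking the \emph{whole family} \eqref{e107-b} (valid for all $N$, with an exponentially small remainder), so that the uniformly bounded — but not small — factor $M_{T_0}$ produced by the time shift $r\in[0,T_0)$ is absorbed by the exponential $e^{-\omega(N-1)T_0}$ after taking $N$ large.
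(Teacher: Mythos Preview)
Your proof is correct and follows essentially the same approach as the paper: both reduce to the weak observability characterization of Theorem~\ref{th03}, absorb the semigroup time shift $S(NT)^*=S(r)^*S(KT_0)^*$ by a bounded factor, compensate that factor via an iterated inequality (you invoke criterion~$(ii)$ of Theorem~\ref{th03} directly, whereas the paper starts from~$(iii)$ and iterates it by hand to obtain $\delta^m$ on the right), and then perturb the interval operators $G_i(T_0)\to G_i(T)$ in operator norm. Your version is slightly more streamlined in that it unifies the cases $T>T_0$ and $T<T_0$ through $K=[NT/T_0]\in\{N-1,N\}$, while the paper treats these two cases separately (using $m_0N$ and $(m_0+1)N$ blocks, respectively).
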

\begin{proof}
Without loss of generality, we can assume that $\mathcal{T}\neq\emptyset$.
We arbitrarily fix a $T_0\in \mathcal{T}$.
Then, according to Theorem \ref{th03}, there are constants $N\in \mathbb{N}^+$,  $\delta\in (0,1)$, and $C\geq 0$ such that (\ref{e107}) (with $T=T_0$) holds.

We claim that there is an $\eta:=\eta(T_0, N, C, \delta)\in(0,\min\{1,T_0\})$ such that for each $T\in(T_0-\eta, T_0+\eta)$, there are constants  $\tilde{\delta}\in(0, 1)$, $\widetilde{C}\geq 0$, and $n\in \mathbb{N}^+$  such that
\begin{equation}\label{liu-7-2-02}
\|S(nNT)^*\varphi\|_Y^2\leq \widetilde{C}\sum_{i=1}^{nN}\Big\|\int_{(i-1)T}^{iT}B^*S(t)^*\varphi dt\Big\|_U^2+\tilde{\delta} \|\varphi\|^2_Y,\;\;\varphi\in Y.
\end{equation}
When this claim is proven, we can apply Theorem \ref{th03} to conclude that the system (\ref{yu-5-26-1}) is \textbf{(DC)}$_T$-stabilizable for all $T\in(T_0-\eta, T_0+\eta)$, which means that
 $\mathcal{T}$ is open in $\mathbb{R}^+$.

 The remaining task is to prove the above claim. To this end, we first define the following numbers:
  \begin{equation}\label{liu-7-11-001}
M:=\sup_{t\in [0, NT_0]}\|S(t)^*\|^2_{\mathcal{L}(Y)};\;\; m_0:=\Big[\frac{\ln 4M}{-\ln\delta}\Big]+1;\;\;\delta_0:=2M\delta^{m_0};\;\;\eta:=\min\{1, \eta_1, \eta_2\},
\end{equation}
where
   \begin{equation*}\label{liu-7-11-02}
 \eta_1:=\frac{T_0}{m_0+1},\;\; \eta_2:=\frac{\sqrt{\delta_0}}{\sqrt{12M(C+1)(m_0N)^3}(\|B^*\|_{\mathcal{L}(Y; U)}+1)\sup_{t\in[0, m_0N(T_0+1)]}\|S(t)^*\|_{\mathcal{L}(Y)}}.
\end{equation*}
It is clear that $M\geq 1$ and $\delta_0\in(0,1)$. Next, we organize the rest of the proof in several steps.

\vskip 5pt
\noindent \emph{Step 1. We prove that when $m\in \mathbb{N}^+$,
\begin{equation}\label{liu-7-11-04}
\|S(mNT_0)^*\varphi\|_Y^2\leq C\sum_{i=1}^{mN}\Big\|\int_{(i-1)T_0}^{iT_0}B^*S(t)^*\varphi dt\Big\|_U^2+\delta^m \|\varphi\|^2_Y\;\;\mbox{for any}\;\;\varphi\in Y.
\end{equation}}

First, (\ref{liu-7-11-04}), with  $m=1$, is exactly (\ref{e107}) with $T_0\in \mathcal{T}$.
Next, we inductively assume that (\ref{liu-7-11-04}) holds for  $m=k$.
Then, by (\ref{liu-7-11-04}), where $m=1$ or $m=k$, we obtain
that when $\varphi\in Y$,
\begin{eqnarray*}\label{liu-7-11-05}
&&\|S((k+1)NT_0)^*\varphi\|_Y^2=\|S(kNT_0)^*S(NT_0)^*\varphi\|_Y^2\nonumber\\
&\leq& C\sum_{i=1}^{kN}\Big\|\int_{(i-1)T_0}^{iT_0}B^*S(t)^*S(NT_0)^*\varphi dt\Big\|_U^2+\delta^k \|S(NT_0)^*\varphi\|^2_Y\nonumber\\
&\leq& C\sum_{i=N+1}^{(k+1)N}\Big\|\int_{(i-1)T_0}^{iT_0}B^*S(t)^*\varphi dt\Big\|_U^2+\delta^k \Big(C\sum_{i=1}^{N}\Big\|\int_{(i-1)T_0}^{iT_0}B^*S(t)^*\varphi dt\Big\|_U^2+\delta \|\varphi\|^2_Y \Big)\nonumber\\
&\leq& C\sum_{i=1}^{(k+1)N}\Big\|\int_{(i-1)T_0}^{iT_0}B^*S(t)^*\varphi dt\Big\|_U^2+\delta^{k+1} \|\varphi\|^2_Y,
\end{eqnarray*}
which leads to  (\ref{liu-7-11-04}), with $m=k+1$.
 Finally, according to mathematical induction, we see that (\ref{liu-7-11-04}) holds for all $m\in \mathbb{N}^+$.
\vskip 5pt
\noindent
\emph{Step 2. We prove (\ref{liu-7-2-02}), where $T\in (T_0, T_0+\eta)$.}
\par
\vskip 5pt
  We arbitrarily fix a $T\in  (T_0, T_0+\eta)$. Then, by
  \eqref{liu-7-11-001}, we have
     $0<m_0NT-m_0NT_0<NT_0$, which, along with the first equation in   \eqref{liu-7-11-001},
     yields
     \begin{equation*}\label{liu-7-11-06}
\|S(m_0NT-m_0NT_0)^*\|_{\mathcal{L}(Y)}^2\leq M.
\end{equation*}
The above, along with (\ref{liu-7-11-04}) (where $m=m_0$), indicates that
\begin{eqnarray}\label{liu-7-11-07}
\|S(m_0NT)^*\varphi\|_Y^2&\leq&\|S(m_0NT-m_0NT_0)^*\|_{\mathcal{L}(Y)}^2\|S(m_0NT_0)^*\varphi\|_Y^2\nonumber\\
&\leq& MC\sum_{i=1}^{m_0N}\Big\|\int_{(i-1)T_0}^{iT_0}B^*S(t)^*\varphi dt\Big\|_U^2+M\delta^{m_0} \|\varphi\|^2_Y\;\;\mbox{for any}\;\;\varphi\in Y.
\end{eqnarray}
Meanwhile, direct computations, along with \eqref{liu-7-11-001}, indicate that
when $j\in\{1, 2, \ldots, m_0N\}$,
\begin{eqnarray*}\label{liu-7-11-08}
    &\;&\Big\|\int_{(j-1)T_0}^{jT_0}B^*S(t)^*\varphi dt\Big\|_U^2\nonumber\\
    &\leq& 3\Big(\Big\|\int_{jT}^{jT_0}B^*S(t)^*\varphi dt\Big\|^2_U
    +\Big\|\int_{(j-1)T}^{jT}B^*S(t)^*\varphi dt\Big\|^2_U
    +\Big\|\int_{(j-1)T_0}^{(j-1)T}B^*S(t)^*\varphi dt\Big\|^2_U\Big)
    \nonumber\\
    &\leq& 3\Big\|\int_{(j-1)T}^{jT}B^*S(t)^*\varphi dt\Big\|^2_U+6(m_0N)^2\eta^2\|B^*\|^2_{\mathcal{L}(Y; U)}\Big(\sup_{t\in[0, m_0N(T_0+1)]}\|S(t)^*\|_{\mathcal{L}(Y)}\Big)^2\|\varphi\|_Y^2\nonumber\\
      &\leq&3\Big\|\int_{(j-1)T}^{jT}B^*S(t)^*\varphi dt\Big\|^2_U+\frac{ \delta_0}{2M(C+1)m_0N}\|\varphi\|_Y^2.
\end{eqnarray*}
This, together with (\ref{liu-7-11-07}), yields
\begin{equation}\label{liu-7-11-09wang7-25}
\|S(m_0NT)^*\varphi\|_Y^2 \leq 3 MC\sum_{i=1}^{m_0N}\Big\|\int_{(i-1)T}^{iT}B^*S(t)^*\varphi dt\Big\|_U^2+ \delta_0\|\varphi\|^2_Y\;\;\mbox{for any}\;\;\varphi\in Y.
\end{equation}
From \eqref{liu-7-11-09wang7-25}, we are led to  (\ref{liu-7-2-02}), where  $n=m_0$, $\tilde{\delta}=\delta_0$, and
 $T\in  (T_0, T_0+\eta)$.

\vskip 5pt
\noindent\emph{Step 3. We prove (\ref{liu-7-2-02}) where $T\in (T_0-\eta, T_0)$.}
\vskip 5pt
   We arbitrarily fix $T\in  (T_0-\eta, T_0)$.
   Then, by
  \eqref{liu-7-11-001}, we have
    $0<(m_0+1)NT-m_0NT_0<NT_0$. This, along with (\ref{liu-7-11-04}) (where $m=m_0$) and the first equation in \eqref{liu-7-11-001}, yields that when $\varphi\in Y$,
\begin{eqnarray*}\label{liu-7-11-10}
\|S((m_0+1)NT)^*\varphi\|_Y^2
\leq MC\sum_{i=1}^{m_0N}\Big\|\int_{(i-1)T_0}^{iT_0}B^*S(t)^*\varphi dt\Big\|_U^2+M\delta^{m_0} \|\varphi\|^2_Y.
\end{eqnarray*}
Then, by similar arguments to those used in the proof of
\eqref{liu-7-11-09wang7-25}, we obtain
\begin{equation*}\label{liu-7-11-09}
\|S((m_0+1)NT)^*\varphi\|_Y^2 \leq 3 MC\sum_{i=1}^{m_0N}\Big\|\int_{(i-1)T}^{iT}B^*S(t)^*\varphi dt\Big\|_U^2+ \delta_0 \|\varphi\|^2_Y\;\;\mbox{for any}\;\;\varphi\in Y.
\end{equation*}
This, along with (\ref{liu-7-11-001}), leads to  (\ref{liu-7-2-02}),
where $n=m_0+1$, $\tilde{\delta}:=\delta_0$, and $T\in  (T_0-\eta, T_0)$.

Thus, we have completed the proof of Theorem \ref{liu-proposition-7-2-1}.
\end{proof}

As a direct consequence of Theorem \ref{liu-proposition-7-2-1}, we have the following.

\begin{corollary}\label{liu-corollary-7-2-1}
If $\mathbb{R}^+\setminus \mathcal{T}$ is dense in $\mathbb{R}^+$, then $\mathcal{T}=\emptyset$.
\end{corollary}

\subsection{Examples }\label{yu-sec-6-22-1}
\textbf{Example 1.} \emph{(Harmonic oscillator.)}
   We consider the control harmonic oscillator:
\begin{equation}\label{yu-6-22-1}
    \frac{d}{dt}\left(\begin{array}{c}
      y_1(t) \\
      y_2(t)
    \end{array}\right)=A\left(\begin{array}{c}
      y_1(t) \\
      y_2(t)
    \end{array}\right)+Bu(t),
    \;\;t\in\mathbb{R}^+,
\end{equation}
    where $A:=\left(
                  \begin{array}{cc}
                    0 & 1 \\
                    -1 & 0 \\
                  \end{array}
                \right)$, $B:=\left(\begin{array}{c}
                  0 \\
                  1
                \end{array}\right)$, and $u\in L^2(\mathbb{R}^+;\mathbb{R})$.
   Several facts deserve to be mentioned. First, the system \eqref{yu-6-22-1}
   can be put into our framework (\ref{yu-5-26-1}) by setting  $Y:=\mathbb{R}^2$ and $U:=\mathbb{R}$.
   Second, since $\mbox{Rank}(B, AB)=2$, the system \eqref{yu-6-22-1} is controllable. Consequently, it is  \textbf{(CC)}-stabilizable. Third, since $\sigma(A)=\{\verb"i", -\verb"i"\}$, we have that $T$ is pathological (w.r.t. $A$)\footnote{Given an $n\times n$ matrix $A$, we call  $T>0$ pathological (w.r.t. A) if
    $T=2k\pi/|\mathrm{Im}(\lambda_i-\lambda_j)|$, where
      $\lambda_i$ and $\lambda_j$ are two distinct eigenvalues (of $A$) having the same non-negative real parts, and $k\in\mathbb{N}^+$ (\cite{Chen-96}). }
    if and only if $T=k\pi$ for some $k\in \mathbb{N}^+$.

    Concerning the \textbf{(DC)}$_T$-stabilizability, we have the following result:
\begin{theorem}\label{yu-theorem-7-1-1}
    The system (\ref{yu-6-22-1}) is \textbf{(DC)}$_T$-stabilizable if and only if
    $T\notin\{k\pi:k\in\mathbb{N}^+\}$ (i.e., $T$ is not pathological
   (w.r.t. $A$)).
\end{theorem}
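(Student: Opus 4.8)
The plan is to apply the characterization in Theorem~\ref{th03}, so that \textbf{(DC)}$_T$-stabilizability of \eqref{yu-6-22-1} is equivalent to the existence of $N\in\mathbb{N}^+$, $\delta\in(0,1)$, and $C\geq 0$ with
\begin{equation*}
\|S(NT)^*\varphi\|_Y^2\leq C\sum_{i=1}^{N}\Big\|\int_{(i-1)T}^{iT}B^*S(t)^*\varphi\,dt\Big\|_U^2+\delta\|\varphi\|_Y^2\quad\text{for all }\varphi\in Y.
\end{equation*}
Since $Y=\mathbb{R}^2$, $U=\mathbb{R}$, $A$ is skew-symmetric and $e^{tA}$ is the rotation by angle $t$, we have $S(NT)^*=e^{NTA^\top}$ is an orthogonal matrix, so $\|S(NT)^*\varphi\|_Y^2=\|\varphi\|_Y^2$ for every $\varphi$. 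Hence for any $\delta<1$ the inequality forces the first (Gramian) term on the right to control $(1-\delta)\|\varphi\|_Y^2$ uniformly in $\varphi$; equivalently, the finite family of vectors $\{\int_{(i-1)T}^{iT}S(t)B\,dt\}_{i=1}^N$ must span $\mathbb{R}^2$. Conversely, if that family spans $\mathbb{R}^2$ for some $N$, then the Gramian term is bounded below by a positive multiple of $\|\varphi\|_Y^2$, and one may simply take $C$ large and $\delta$ any fixed number in $(0,1)$; so \eqref{e107} holds. Thus \textbf{(DC)}$_T$-stabilizability is \emph{equivalent} to: there exists $N$ such that $\mathrm{span}\{\int_{(i-1)T}^{iT}e^{tA}B\,dt : 1\le i\le N\}=\mathbb{R}^2$.

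Next I would compute these vectors explicitly. With $e^{tA}=\begin{pmatrix}\cos t&\sin t\\-\sin t&\cos t\end{pmatrix}$ and $B=(0,1)^\top$ we get $e^{tA}B=(\sin t,\cos t)^\top$, hence
\begin{equation*}
v_i:=\int_{(i-1)T}^{iT}e^{tA}B\,dt=\Big(\cos((i-1)T)-\cos(iT),\ \sin(iT)-\sin((i-1)T)\Big)^\top.
\end{equation*}
Using sum-to-product identities, $v_i = 2\sin(T/2)\big(\sin((i-\tfrac12)T),\ \cos((i-\tfrac12)T)\big)^\top$. So when $\sin(T/2)\neq 0$, i.e. $T\notin\{2k\pi:k\in\mathbb{N}^+\}$, each $v_i$ is a nonzero vector pointing in the direction $\theta_i:=(i-\tfrac12)T\ (\mathrm{mod}\ \pi)$, and consecutive directions differ by $T\ (\mathrm{mod}\ \pi)$. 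The family $\{v_1,\dots,v_N\}$ spans $\mathbb{R}^2$ iff not all $v_i$ are parallel, i.e. iff $T\not\equiv 0\ (\mathrm{mod}\ \pi)$; in that case already $\{v_1,v_2\}$ spans $\mathbb{R}^2$ (take $N=2$). When $\sin(T/2)=0$ (so $T=2k\pi$) all $v_i$ vanish, and when $T=k\pi$ with $k$ odd, $\sin(T/2)\ne 0$ but $\theta_i-\theta_{i-1}=T\equiv 0\ (\mathrm{mod}\ \pi)$, so all $v_i$ are parallel and the span is one-dimensional. In both cases the spanning condition fails for every $N$, hence by the equivalence above the system is not \textbf{(DC)}$_T$-stabilizable; this is precisely the set $T\in\{k\pi:k\in\mathbb{N}^+\}$, which is exactly the pathological set noted in the remark (the two eigenvalues $\verb"i",-\verb"i"$ have equal real part $0$ and $|\mathrm{Im}(\verb"i"-(-\verb"i"))|=2$, giving $T=2k\pi/2=k\pi$).

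Assembling: for $T\notin\{k\pi:k\in\mathbb{N}^+\}$ one verifies \eqref{e107} with $N=2$, $\delta$ any fixed value in $(0,1)$, and $C$ chosen from the (positive) smallest eigenvalue of the Gram matrix of $\{v_1,v_2\}$; Theorem~\ref{th03} then gives \textbf{(DC)}$_T$-stabilizability. For $T=k\pi$, the fact that all $v_i$ are parallel means that for $\varphi$ orthogonal to their common direction the right-hand Gramian term of \eqref{e107} vanishes while $\|S(NT)^*\varphi\|_Y^2=\|\varphi\|_Y^2>\delta\|\varphi\|_Y^2$, contradicting \eqref{e107} for any $N$ and any $\delta<1$; hence the system is not \textbf{(DC)}$_T$-stabilizable. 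I expect the only mildly delicate point to be the bookkeeping that the spanning condition genuinely cannot be rescued by taking $N$ large in the pathological case — but this is immediate once one observes that \emph{every} $v_i$ lies on the same line through the origin, a line independent of $i$.
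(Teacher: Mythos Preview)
Your proposal is correct and follows essentially the same route as the paper: both reduce via Theorem~\ref{th03} and the isometry $\|S(NT)^*\varphi\|=\|\varphi\|$ to the question of whether the vectors $v_i=\int_{(i-1)T}^{iT}e^{tA}B\,dt$ span $\mathbb{R}^2$, and both take $N=2$ for the sufficiency direction. The only cosmetic difference is that the paper checks invertibility by computing $\det\Lambda=-2\sin T(1-\cos T)$ directly, whereas you use the sum-to-product factorization $v_i=2\sin(T/2)\big(\sin((i-\tfrac12)T),\cos((i-\tfrac12)T)\big)^\top$ to read off the same conclusion geometrically.
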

\begin{proof}
    One can easily check that, for each $t\in\mathbb{R}^+$,
\begin{equation*}\label{yu-7-1-1}
\begin{cases}
    \|e^{A^\top t}(\varphi_1, \varphi_2)^\top\|_{\mathbb{R}^2}=\|(\varphi_1, \varphi_2)^\top\|_{\mathbb{R}^2},\\
    B^\top e^{A^\top t}(\varphi_1, \varphi_2)^\top =\varphi_1\sin t+\varphi_2\cos t,
\end{cases}
\;\;\mbox{
    when}\;\;(\varphi_1, \varphi_2)^\top\in\mathbb{R}^2.
\end{equation*}
     Then, according to Theorem \ref{th03}, the system (\ref{yu-6-22-1}) is \textbf{(DC)}$_T$-stabilizable if and only if there is an $N\in\mathbb{N}^+$ and a $C>0$ such that
\begin{equation}\label{yu-7-1-2}
    \|(\varphi_1, \varphi_2)^\top\|^2_{\mathbb{R}^2}
    \leq C\sum_{j=1}^N\Big|\int_{(j-1)T}^{jT}(\varphi_1\sin t+\varphi_2\cos t)dt\Big|^2\;\;\mbox{for any}\;\;(\varphi_1, \varphi_2)^\top\in\mathbb{R}^2.
\end{equation}
\par
   The rest of the proof is organized in two steps.

   \vskip 5pt
   \noindent {\it Step 1. We prove that if the system (\ref{yu-6-22-1}) is \textbf{(DC)}$_T$-stabilizable, then
    $T\notin\{k\pi:k\in\mathbb{N}^+\}$.}

   By contradiction, we suppose that
       the system (\ref{yu-6-22-1}) is \textbf{(DC)}$_T$-stabilizable for some $T\in \{k\pi:k\in\mathbb{N}^+\}$.
      Then, we let $(\varphi_1,\varphi_2)^\top:=(0,\gamma)^\top$ ($\gamma>0$), from which it follows that $\|(\varphi_1,\varphi_2)^\top\|_{\mathbb{R}^2}=\gamma$ and that for each $N\in\mathbb{N}^+$,
\begin{equation*}\label{yu-7-1-3}
    \sum_{j=1}^N\Big|\int_{(j-1)T}^{jT}(\varphi_1\sin t+\varphi_2\cos t)dt\Big|^2
    =\gamma^2\sum_{j=1}^N|\sin(jT)-\sin((j-1)T)|^{2}=0.
\end{equation*}
    These lead to a contradiction of the inequality (\ref{yu-7-1-2}).
\vskip 5pt
   \noindent {\it Step 2. We prove that if $T\notin\{k\pi:k\in\mathbb{N}^+\}$, then the system (\ref{yu-6-22-1}) is \textbf{(DC)}$_T$-stabilizable.}

   We arbitrarily take $T\notin\{k\pi:k\in\mathbb{N}^+\}$, and we denote $a_{1j}=\int_{(j-1)T}^{jT}\sin tdt, a_{2j}=\int_{(j-1)T}^{jT}\cos tdt, j=1, 2$, and  $\Lambda:= (a_{ij})_{1\leq i, j\leq 2}$. Then, the inequality (\ref{yu-7-1-2}) with $N=2$ can be equivalently written as
   \begin{equation*}\label{liu-8-6-1}
    \|(\varphi_1, \varphi_2)^\top\|^2_{\mathbb{R}^2}
    \leq C(\varphi_1, \varphi_2)\Lambda\Lambda^\top(\varphi_1, \varphi_2)^\top.
\end{equation*}
To show the system (\ref{yu-6-22-1}) is \textbf{(DC)}$_T$-stabilizable, it suffices to prove that the matrix $\Lambda$ is non-singular. By direct calculation, the determinant of $\Lambda$ can be obtained as
\begin{equation*}\label{liu-8-6-2}
 \det (\Lambda)=-2\sin T(1-\cos T).
\end{equation*}
Since $T\notin\{k\pi:k\in\mathbb{N}^+\}$, we see that $\det (\Lambda)\neq 0$.

\par
   Hence, we have completed the proof of Theorem \ref{yu-theorem-7-1-1}.
   \end{proof}

   \begin{remark}\label{remark5.2-7-8}
   For a general finite-dimensional setting where $Y:=\mathbb{R}^n$, $U:=\mathbb{R}^m$, $A\in\mathbb{R}^{n\times n}$, and $B\in\mathbb{R}^{m\times n}$ ($n,m\in\mathbb{N}^+$), the connections between the ``pathological'' characteristic, \textbf{(CC)}-stabilizability, and  \textbf{(DC)}$_T$-stabilizability
   have been studied previously \cite[Theorem 8]{Hutus-1970} (see also \cite[Section 3.2]{Chen-96}), via different approaches.
   We believe that results obtained therein can also be verified by our Theorem \ref{th03}.
         \end{remark}

\noindent \textbf{Example 2. }\emph{(Fractional heat equations.)}
 We consider the control fractional heat equation:
  \begin{equation}\label{ex01-1}
\partial_t y(x,t)+(-\triangle)^{\frac{s}{2}}y(x,t)-c y(x,t)=\chi_E(x)u(x,t)\;\; \mbox{in}\;\; \mathbb{R}^n\times\mathbb{R}^+,
\end{equation}
where $c\geq 0$, $s>1$, $E$ is  a  measurable subset  of $\mathbb{R}^n$,  $u\in L^2(\mathbb{R}^+;L^2(\mathbb{R}^n))$, and
the fractional Laplacian $(-\triangle)^{\frac{s}{2}}$ is defined by
$$
(-\triangle)^{\frac{s}{2}}\varphi:=\mathcal{F}^{-1}[|\xi|^s\mathcal{F}[\varphi]],\;\; \varphi\in C_c^\infty(\mathbb{R}^n).
$$
Two points deserve to be mentioned. First, the system  (\ref{ex01-1}) can be put into our framework (\ref{yu-5-26-1}) by setting
 $Y=U:=L^2(\mathbb{R}^n)$,   $A:=-(-\triangle)^{\frac{s}{2}}+c$ with
$D(A)=H^s(\mathbb{R}^n)$, and  $Bv:=\chi_E v$ for each $v\in L^2(\mathbb{R}^n)$.
Second, the system  (\ref{ex01-1}) is not exponentially stable when $u=0$ (this can be directly checked.)

   Concerning the connection between the \textbf{(DC)}$_T$-stabilizability and the \textbf{(CC)}-stabilizability
    for the system (\ref{ex01-1}), we have the following result.
    \begin{theorem}\label{th301}
    The following statements are equivalent:
\begin{enumerate}
  \item [(i)] The system (\ref{ex01-1}) is \textbf{(DC)}$_T$-stabilizable for some $T>0$.
  \item [(ii)] The system (\ref{ex01-1}) is \textbf{(DC)}$_T$-stabilizable for all $T>0$.
  \item [(iii)] The system (\ref{ex01-1}) is \textbf{(CC)}-stabilizable.
  \item [(iv)] The system (\ref{ex01-1}) is completely stabilizable.
  \end{enumerate}
    \end{theorem}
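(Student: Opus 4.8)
The plan is to exploit the self-adjointness of $A=-(-\triangle)^{s/2}+c$ to turn every observability inequality occurring in Theorems~\ref{th03} and \ref{thm1.11-7-26} into a statement about the fractional heat flow and the set $E$, and then to show that all four notions are equivalent to a single geometric condition on $E$ (thickness). On the Fourier side, $S(t)^*=S(t)$ is multiplication by $e^{(c-|\xi|^s)t}$, and $B^*=B$ is multiplication by $\chi_E$; write $\Pi_\rho Y:=\{\varphi\in Y:\ \widehat\varphi\ \text{supported in}\ \{|\xi|\le\rho\}\}$ for the band-limited subspaces. If $|E|=0$ then $B=0$, the open loop $y'=Ay$ is not exponentially stable, and none of (i)--(iv) holds; so assume $|E|>0$. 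Three implications are immediate: (iv)$\Rightarrow$(iii) (complete stabilizability is stronger), (ii)$\Rightarrow$(i) (trivial), and (i)$\Rightarrow$(iii) (Theorem~\ref{thm1.11-7-26}(i)). Thus it remains to prove (iii)$\,\Rightarrow\,$($E$ thick), ($E$ thick)$\,\Rightarrow\,$(iv), and ($E$ thick)$\,\Rightarrow\,$(ii); together with the above this closes the cycle (i)$\Rightarrow$(iii)$\Rightarrow$($E$ thick)$\Rightarrow$(iv)$\Rightarrow$(iii) and ($E$ thick)$\Rightarrow$(ii)$\Rightarrow$(i), making all four equivalent to thickness of $E$.

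Step~A: (iii)$\,\Rightarrow\,$$E$ thick. By \cite[Theorem~1]{Trelat-20}, (iii) is equivalent to the weak observability inequality \eqref{yu-6-18-3}, which here reads $\|S(T)\varphi\|_Y^2\le C\int_0^T\|\chi_E S(t)\varphi\|_Y^2\,dt+\delta\|\varphi\|_Y^2$ for all $\varphi\in Y$. Restricting it to $\varphi\in\Pi_\rho Y$ with $\rho^s<c$, so that $\|S(T)\varphi\|_Y^2\ge e^{2(c-\rho^s)T}\|\varphi\|_Y^2>\delta\|\varphi\|_Y^2$, and absorbing the $\delta$-term, yields a genuine observability estimate $\|\varphi\|_Y^2\le C'\int_0^T\|\chi_E S(t)\varphi\|_Y^2\,dt$ on $\Pi_\rho Y$. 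Since band-limited functions are real-analytic and $t\mapsto S(t)\varphi$ is analytic on such data, this estimate is equivalent, up to $\rho$-dependent constants, to the Logvinenko--Sereda inequality $\|\varphi\|_{L^2(\mathbb{R}^n)}\le C''\|\varphi\|_{L^2(E)}$ on $\Pi_\rho Y$, whose validity forces $E$ to be thick.

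Step~B: $E$ thick $\Rightarrow$ (iv), and $E$ thick $\Rightarrow$ (ii). Thickness of $E$ supplies the Logvinenko--Sereda (spectral) inequality for $(-\triangle)^{s/2}$ with a constant of order $e^{c_0\rho}$ at frequency cut-off $\rho$, which is dominated by the dissipation rate $\rho^s$ ($s>1$), so a Lebeau--Robbiano iteration over dyadic frequency scales converges. For the continuous observation it produces observability of $\Pi_\rho Y$ for every $\rho$ in arbitrarily short time, hence, via a Gramian/LQ feedback on $\Pi_\rho Y$ with $\rho$ chosen according to the desired rate, complete stabilizability, i.e.\ (iv); this is the standard route, known for the fractional heat equation. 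For the discrete observation, by Theorem~\ref{th03} it suffices to establish \eqref{e107} for every $T>0$; the observation over the period $[(i-1)T,iT]$ equals $\chi_E S((i-1)T)D_0\varphi$ with $D_0:=\int_0^T S(t)\,dt$, the Fourier multiplier $m_T(\xi)=(e^{(c-|\xi|^s)T}-1)/(c-|\xi|^s)>0$. As $m_T$ is bounded above and below on each frequency shell, $D_0$ is boundedly invertible there with controlled norm, so the same dyadic iteration, run with the sampled observations $\{\chi_E S((i-1)T)D_0\,\cdot\}_{i=1}^{N}$ in place of the time integral, yields \eqref{e107} with a suitable $N\in\mathbb{N}^+$ and $\delta\in(0,1)$, hence (ii).

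The main obstacle is the discrete case in Step~B: running the Lebeau--Robbiano scheme with \emph{sampled} observations and the averaging operator $D_0$, and bookkeeping the resulting constants so that they match the form of \eqref{e107}, namely a possibly large $C$ in front of the observation sum but $\delta<1$ in front of $\|\varphi\|_Y^2$, uniformly for all $T>0$. The converse Logvinenko--Sereda step in Step~A (extracting thickness of $E$ from observability of a single band-limited subspace under the heat flow) is the other delicate point. Everything else is soft functional analysis combined with the already established Theorems~\ref{th03} and \ref{thm1.11-7-26} and the classical equivalence, for parabolic-type equations, between complete stabilizability and the spectral inequality holding at every frequency.
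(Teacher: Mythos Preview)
Your overall architecture---pivoting on thickness of $E$ and closing the cycle through (i)$\Rightarrow$(iii)$\Rightarrow$(thickness)$\Rightarrow$(iv) and (thickness)$\Rightarrow$(ii)---coincides with the paper's. The paper, however, outsources most of the work: the equivalences (iii)$\Leftrightarrow$(iv)$\Leftrightarrow$(thickness) are quoted directly from \cite[Theorem~1.1]{Huang-2021} and \cite[Theorem~4.5]{Liu-22}, so your Step~A and the (iv) part of Step~B are essentially reproving those references. (One small slip in Step~A: you take $\rho^s<c$, which is vacuous when $c=0$; the correct choice is any $\rho>0$ with $e^{2(c-\rho^s)T}>\delta$, which exists since $e^{2cT}\ge 1>\delta$.)

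The genuine methodological divergence is in the implication (thickness)$\Rightarrow$(ii). You propose to run a Lebeau--Robbiano iteration directly on the sampled observations $\chi_E S((i-1)T)D_0\varphi$, and you correctly flag the bookkeeping of constants as the main obstacle. The paper bypasses this entirely: it invokes the \emph{interpolation} inequality of \cite[Theorem~1.2]{Wang-Zhang-2021},
\[
\|S(r)\psi\|^2\le C_0\,\|\chi_E S(r)\psi\|^{2\theta}\|\psi\|^{2(1-\theta)},
\]
applies it with the single clever choice $\psi=\int_0^{[N/2]T}S([N/2]T-t)\varphi\,dt$ and $r=(N-[N/2])T$, and then uses elementary Fourier bounds (on the multiplier $\int_0^{[N/2]T}e^{(c-|\xi|^s)(\cdot-t)}dt$) together with Young's inequality to produce \eqref{e107} with an arbitrary $\delta\in(0,1)$. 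This yields (ii) in a few lines, without any dyadic iteration or frequency-by-frequency bookkeeping. Your route would presumably succeed, but the paper's argument is both shorter and avoids exactly the obstacle you identified.
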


    Theorem \ref{th301} follows immediately from
    \cite[Theorem 1.1]{Huang-2021}, \cite[Theorem 4.5]{Liu-22},
    and the following lemma.

\begin{lemma}\label{th301w7-7}
The following statements are equivalent:
\begin{enumerate}
  \item [(i)] The set $E$ is thick{\footnote{By a thick set $E$ in $\mathbb{R}^n$, we mean that it is measurable and
      satisfies for some constants  $\gamma>0$ and $L>0$,
$$|E\cap Q_L(x)|\geq \gamma L^n \ \mathrm{for\ each}\  x\in \mathbb{R}^n,$$
   where $Q_L(x)$ is the closed cube in $\mathbb{R}^n$ (centered at $x$ and with side-length $L$) and $|E\cap Q_L(x)|$ denotes the Lebesgue measure of $E\cap Q_L(x)$.}} in $\mathbb{R}^n$.
    \item [(ii)]   The system (\ref{ex01-1}) is \textbf{(DC)}$_T$-stabilizable for some $T>0$.

  \item [(iii)] The system (\ref{ex01-1}) is \textbf{(DC)}$_T$-stabilizable for all $T>0$.
\end{enumerate}
   \end{lemma}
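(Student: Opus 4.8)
\textbf{Proof proposal for Lemma \ref{th301w7-7}.}

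The plan is to prove the three equivalences by establishing $(i)\Rightarrow(iii)\Rightarrow(ii)\Rightarrow(i)$, exploiting the characterization of the \textbf{(DC)}$_T$-stabilizability through the weak observability inequality \eqref{e107} from Theorem \ref{th03}. The implication $(iii)\Rightarrow(ii)$ is immediate. The real work lies in $(i)\Rightarrow(iii)$ and $(ii)\Rightarrow(i)$, and both will be reduced to a spectral inequality for the fractional Laplacian on thick sets. First I would record the dual dynamics: since $A=-(-\triangle)^{s/2}+c$ is self-adjoint, $S(t)^*=S(t)=e^{ct}e^{-t(-\triangle)^{s/2}}$, so for $\varphi\in Y=L^2(\mathbb R^n)$ one has $B^*S(t)^*\varphi=\chi_E\,e^{ct}e^{-t(-\triangle)^{s/2}}\varphi$. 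The quantity $\int_{(i-1)T}^{iT}B^*S(t)^*\varphi\,dt$ appearing in \eqref{e107} is then $\chi_E$ applied to $\int_{(i-1)T}^{iT}e^{ct}e^{-t(-\triangle)^{s/2}}\varphi\,dt$, which, via the Fourier transform, is the Fourier multiplier with symbol $\int_{(i-1)T}^{iT}e^{ct}e^{-t|\xi|^s}\,dt$ — a strictly positive, smooth, decreasing-in-$|\xi|$ function, hence comparable to $e^{-iT|\xi|^s}$ up to constants depending on $T,c,s$ on each dyadic frequency shell.

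For $(i)\Rightarrow(iii)$, I would invoke the spectral inequality for thick sets (the Logvinenko–Sereda / Kovrijkine type estimate, which is exactly \cite[Theorem 1.1]{Huang-2021}): if $E$ is thick then for every $J>0$ there is $C=C(E,J,n)$ with $\|\varphi\|_{L^2(\mathbb R^n)}^2\le C\|\chi_E\varphi\|_{L^2(\mathbb R^n)}^2$ for all $\varphi$ with $\operatorname{supp}\widehat\varphi\subset\{|\xi|\le J\}$, together with the resulting \textbf{(CC)}-stabilizability already contained in \cite[Theorem 4.5]{Liu-22}. So this direction follows by citing those results and noting thickness gives \textbf{(CC)}-stabilizability, hence in particular \eqref{yu-6-18-3}; but to land inside $(iii)$ I actually want \eqref{e107}, so I would instead argue directly: split $\varphi$ into a low-frequency part $P_J\varphi$ (frequencies $\le J$) and a high-frequency part. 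On the high-frequency part, $\|S(NT)^*(I-P_J)\varphi\|_Y^2\le e^{2cNT}e^{-2NT J^s}\|\varphi\|_Y^2$, which can be made $\le\tfrac{\delta}{2}\|\varphi\|_Y^2$ by choosing $J=J(N)$ large; on the low-frequency part, thickness plus the Fourier-multiplier lower bound on $\int_{(i-1)T}^{iT}e^{ct}e^{-t|\xi|^s}dt$ over $|\xi|\le J$ yields $\|P_J\varphi\|_Y^2\le C\sum_{i=1}^N\|\int_{(i-1)T}^{iT}B^*S(t)^*\varphi\,dt\|_U^2$; combining and absorbing cross terms gives \eqref{e107}. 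Then Theorem \ref{th03} delivers \textbf{(DC)}$_T$-stabilizability for that (and by the same argument every) $T$, giving $(iii)$.

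For $(ii)\Rightarrow(i)$ I would argue by contraposition: if $E$ is not thick, then by the known characterization of thickness (e.g. via \cite{Huang-2021} or a direct Zygmund-type construction) there exist, for every $L>0$ and every $\gamma>0$, cubes where $E$ has vanishingly small density; using these one builds, for any fixed $T>0$ and any $N\in\mathbb N^+$ and any $C>0$, a unit vector $\varphi$ (a suitably modulated bump concentrated in frequency near a point where $E$ is sparse in physical space near the relevant spatial scale) violating \eqref{e107}. Concretely, since $S(NT)^*$ is unitary up to the factor $e^{cNT}$ on each frequency, $\|S(NT)^*\varphi\|_Y\ge e^{cNT}\|\varphi\|_Y$ while $\sum_{i=1}^N\|\chi_E\int_{(i-1)T}^{iT}e^{ct}e^{-t(-\triangle)^{s/2}}\varphi\,dt\|_U^2$ is controlled by $\|\chi_E\,(\text{smoothed }\varphi)\|_Y^2$, which can be driven to zero by concentrating $\varphi$ away from $E$; hence \eqref{e107} fails for all $N,C,\delta$, so by Theorem \ref{th03} the system is not \textbf{(DC)}$_T$-stabilizable for any $T$, contradicting $(ii)$. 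The main obstacle I anticipate is this last construction: one must beat the smoothing of the semigroup $e^{-t(-\triangle)^{s/2}}$, which spreads mass, so the localized trial function has to be chosen at a spatial scale adapted to both the non-thickness scale of $E$ and the parabolic smoothing length $\sim(NT)^{1/s}$; getting a clean quantitative contradiction uniformly in $N$ is the delicate point, and it is precisely here that one wants to lean on the sharp characterization of thickness from \cite{Huang-2021} rather than reprove it.
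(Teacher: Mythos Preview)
Your proof of $(i)\Rightarrow(iii)$ via a low/high frequency splitting combined with the Logvinenko--Sereda spectral inequality is correct in spirit and constitutes a genuine alternative to the paper's argument. The paper instead invokes the one-point interpolation estimate of \cite{Wang-Zhang-2021},
\[
\|S(r)^*\psi\|_{L^2}^2\le C\|\chi_E S(r)^*\psi\|_{L^2}^{2\theta}\|\psi\|_{L^2}^{2(1-\theta)},
\]
applied with $\psi=\int_0^{[N/2]T}S([N/2]T-t)^*\varphi\,dt$ and $r=(N-[N/2])T$, and then uses Young's inequality to produce \eqref{e107} directly. Your route is arguably more elementary (it needs only the spectral inequality, not its dynamical consequence), but it requires handling the cross terms coming from $\chi_E$ not commuting with $P_J$; the paper's route avoids that bookkeeping at the cost of quoting a stronger black box.

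Your $(ii)\Rightarrow(i)$ argument, however, is both unnecessarily hard and contains an error. You write that ``$S(NT)^*$ is unitary up to the factor $e^{cNT}$ on each frequency,'' but the multiplier is $e^{(c-|\xi|^s)NT}$, which is strictly contractive on high frequencies; the inequality $\|S(NT)^*\varphi\|_Y\ge e^{cNT}\|\varphi\|_Y$ is false in general. The construction can be repaired by choosing $\varphi$ supported at a spatial scale $L\gg (NT)^{1/s}$ (so that $\widehat\varphi$ lives at frequencies $|\xi|\lesssim L^{-1}$ where the damping is negligible), but this is exactly the delicate balancing you flag as an obstacle, and it essentially reproves the hard direction of \cite{Huang-2021}. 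The paper sidesteps all of this: it simply observes that $(ii)$ together with Theorem~\ref{thm1.11-7-26}$(i)$ gives \textbf{(CC)}-stabilizability, and then \cite[Theorem 1.1]{Huang-2021} yields thickness. One line, no construction.
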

\begin{proof}
We organize the proof in several steps.
\vskip 5pt
\noindent {\it Step 1. It is trivial that $(iii)\Rightarrow (ii)$.}

\vskip 5pt

\noindent {\it Step 2. We show that $(ii)\Rightarrow (i)$.}

      Suppose that $(ii)$ is true. Then, according to  $(i)$ in Theorem \ref{thm1.11-7-26},  (\ref{ex01-1}) is \textbf{(CC)}-stabilizable. This, along with \cite[Theorem 1.1]{Huang-2021}, yields that $E$ is a thick set in $\mathbb{R}^n$, i.e., $(i)$ holds.

\vskip 5pt

\noindent {\it Step 3. We show that $(i)\Rightarrow (iii)$.}

Suppose that $(i)$ is true. Then, according to
  \cite[Theorem 1.2]{Wang-Zhang-2021}, for each $r>0$, there is a $C_0(E,  r)>0$ and a $\theta\in(0,1)$ such that
\begin{equation}\label{ex02}
\|S(r)^*\psi\|_{L^2(\mathbb{R}^n)}^2\leq C_0(E,  r)  \|\chi_ES(r)^*\psi\|_{L^2(\mathbb{R}^n)}^{2\theta}\|\psi\|_{L^2(\mathbb{R}^n)}^{2(1-\theta)}\;\;\mbox{for any}\;\;\psi\in L^2(\mathbb{R}^n).
\end{equation}
We arbitrarily fix $T>0$, $\delta\in (0,1)$, $N\in\{2,3,\ldots\}$, and $\varphi\in L^2(\mathbb{R}^n)$. By letting
$\psi:=\int_0^{[N/2]T}S([N/2]T-t)^*\varphi dt$
  and $r:=(N-[N/2])T$ in (\ref{ex02}), we obtain
\begin{eqnarray}\label{ex03}
&\;&\Big\|\int_0^{[N/2]T}S(NT-t)^*\varphi dt\Big\|_{L^2(\mathbb{R}^n)}^2\nonumber\\
&\leq& C(E,N,T)  \Big\|\int_0^{[N/2]T}\chi_ES(NT-t)^*\varphi dt\Big\|_{L^2(\mathbb{R}^n)}^{2\theta}\Big\|\int_0^{[N/2]T}S([N/2]T-t)^*\varphi dt\Big\|_{L^2(\mathbb{R}^n)}^{2(1-\theta)},
\end{eqnarray}
    where $C(E,N,T):=C_0(E,(N-[N/2])T)$.
    Meanwhile, we set  $z(t):=S(t)^*\varphi$ and $t\geq 0$.
     Then, $z(\cdot)$ satisfies the equation (\ref{ex01-1}) (where $u=0$) with the initial condition $y(0)=\varphi$ (here, we use the fact that $(A,D(A))$ is self-adjoint). Using the Fourier transform
     of equation (\ref{ex01-1}) (where $u=0$ and $y$ is replaced by $z$) leads to
\begin{equation}\label{ex04}
\mathcal{F}[z(t)]=e^{(c-|\xi|^s)t}\mathcal{F}[\varphi],\;\;\;t\in\mathbb{R}^+.
\end{equation}
Applying the Plancherel theorem to (\ref{ex04}) yields the following two inequalities:
\begin{eqnarray*}
&\;&\Big\|\int_0^{[N/2]T}S(NT-t)^*\varphi dt\Big\|_{L^2(\mathbb{R}^n)}\nonumber\\
&=&\Big\|\mathcal{F}\Big[\int_0^{[N/2]T}S(NT-t)^*\varphi dt\Big]\Big\|_{L^2(\mathbb{R}^n)}
=\Big\|\int_0^{[N/2]T}\mathcal{F}[z(NT-t)]dt\Big\|_{L^2(\mathbb{R}^n)}\\
&=&\Big\|\Big|\int_0^{[N/2]T}e^{-(c-|\xi|^s)t}dt\Big|\Big|\mathcal{F}[z(NT)]\Big|
\Big\|_{L^2(\mathbb{R}^n)}
\geq [N/2]Te^{-c[N/2]T}\|z(NT)\|_{L^2(\mathbb{R}^n)}
\end{eqnarray*}
and
\begin{eqnarray*}
&\;&\Big\|\int_0^{[N/2]T}S([N/2]T-t)^*\varphi dt\Big\|_{L^2(\mathbb{R}^n)}= \Big\|\int_0^{[N/2]T}\mathcal{F}[z([N/2]T-t)]dt\Big\|_{L^2(\mathbb{R}^n)}\\
&=&\Big\|\int_0^{[N/2]T}e^{(c-|\xi|^s)([N/2]T-t)}dt\mathcal{F}[\varphi]\Big\|_{L^2(\mathbb{R}^n)}
\leq [N/2]T e^{c[N/2]T}\|\varphi\|_{L^2(\mathbb{R}^n)}.
\end{eqnarray*}
The above two inequalities, together with (\ref{ex03}), imply that
\begin{eqnarray*}
\|S(NT)^*\varphi\|_{L^2(\mathbb{R}^n)}^2\leq C(E,N,T,\theta) \Big\|\int_0^{[N/2]T}\chi_ES(NT-t)^*\varphi dt\Big\|_{L^2(\mathbb{R}^n)}^{2\theta}\|\varphi\|_{L^2(\mathbb{R}^n)}^{2(1-\theta)},
\end{eqnarray*}
    where $C(E,N,T,\theta):=\frac{C(E,N,T)}{([N/2]T)^{2\theta}}e^{2c(2-\theta)[N/2]T}$.
Then, using Young's inequality in the above inequalities yields
\begin{equation}\label{ex05}
\|S(NT)^*\varphi\|_{L^2(\mathbb{R}^n)}^2\leq \theta [C(E,N,T,\theta)]^{1/\theta}\delta^{\frac{\theta-1}{\theta}}
\Big\|\int_0^{[N/2]T}\chi_ES(NT-t)^*\varphi dt\Big\|_{L^2(\mathbb{R}^n)}^{2}+\delta\|\varphi\|_{L^2(\mathbb{R}^n)}^{2}.
\end{equation}
Since
\begin{eqnarray*}\Big\|\int_0^{[N/2]T}\chi_ES(NT-t)^*\varphi dt\Big\|_{L^2(\mathbb{R}^n)}^{2}&\leq & [N/2] \sum_{i=1}^{[N/2]}\Big\|\int_{(i-1)T}^{iT}\chi_ES(NT-t)^*\varphi dt\Big\|_{L^2(\mathbb{R}^n)}^{2}\\
&\leq& [N/2]\sum_{i=1}^{N}\Big\|\int_{(i-1)T}^{iT}\chi_ES(NT-t)^*\varphi dt\Big\|_{L^2(\mathbb{R}^n)}^{2},
\end{eqnarray*}
it follows from (\ref{ex05}) that
\begin{eqnarray*}\label{ex06}
&\;&\|S(NT)^*\varphi\|_{L^2(\mathbb{R}^n)}^2\nonumber\\
&\leq& \theta [N/2] [C(E,N,T,\theta)]^{1/\theta}\delta^{\frac{\theta-1}{\theta}}\sum_{i=1}^{N}
\Big\|\int_{(i-1)T}^{iT}\chi_ES(NT-t)^*\varphi dt\Big\|_{L^2(\mathbb{R}^n)}^{2} +\delta\|\varphi\|_{L^2(\mathbb{R}^n)}^{2}.
\end{eqnarray*}
Since $T>0$ was arbitrarily taken, the above leads to
 (\ref{e107}) with any $T>0$.
  Then, by Theorem \ref{th03}, we see that the system (\ref{ex01-1}) is \textbf{(DC)}$_T$-stabilizable for all $T>0$.
\end{proof}
\color{black}

\vskip 5pt \noindent \textbf{Example 3.} (\emph{Schr\"{o}dinger equation}.)
    We consider the control Schr\"{o}dinger equation:
     \begin{equation}\label{ex07}
\verb"i"\partial_ty(x,t)+\partial_{x}^2y(x,t)=u(x,t)\;\;\mbox{in}\;\; \mathbb{R}\times\mathbb{R}^+,
\end{equation}
     where  $u\in L^2(\mathbb{R}^+;L^2(\mathbb{R}; \mathbb{C}))$.
    Several facts are stated as follows. First, equation \eqref{ex07} can be put into our framework (\ref{yu-5-26-1}) by setting  $U=Y:=L^2(\mathbb{R}; \mathbb{C})$, $A:=\verb"i"\partial_{x}^2$ with its domain $D(A)=H^2(\mathbb{R}; \mathbb{C})$,
        $B:=-\verb"i"I$, where $I$ is the identity operator on $L^2(\mathbb{R}; \mathbb{C})$.
         Second, the equation (\ref{ex07}) with the null control
        is not stable. Third, $(A,D(A))$ generates a unitary group $\{S(t)\}_{t\in \mathbb{R}}$ on $L^2(\mathbb{R}; \mathbb{C})$.

    Concerning the \textbf{(CC)}-stabilizability and the \textbf{(DC)}$_T$-stabilizability for
    the system (\ref{ex07}), we have the following result.
 \begin{theorem}\label{th301-b}
  The system (\ref{ex07}) is \textbf{(CC)}-stabilizable but not \textbf{(DC)}$_T$-stabilizable for any $T>0$.
 \end{theorem}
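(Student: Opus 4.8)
The plan is to handle the two halves of the statement separately, using throughout that $\{S(t)\}_{t\in\mathbb{R}}$ is a unitary group, that $B=-\mathtt{i}I$ is boundedly invertible with $B^{-1}=\mathtt{i}I$, and that $B^*=\mathtt{i}I$ (so $B^*S(t)^*\varphi=\mathtt{i}\,S(t)^*\varphi$ and $\|B^*S(t)^*\varphi\|_U=\|\varphi\|_Y$ for all $t$, $\varphi$).

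For the \textbf{(CC)}-stabilizability I would simply exhibit a feedback: fix $\lambda>0$ and set $F:=-\mathtt{i}\lambda I\in\mathcal{L}(Y;U)$, so that $BF=-\lambda I$ and the closed-loop generator in \eqref{yu-6-18-17-12} is $A+BF=A-\lambda I$, which generates the exponentially stable semigroup $e^{-\lambda t}S(t)$. (Equivalently, since $\|S(T)^*\varphi\|_Y^2=\|\varphi\|_Y^2=T^{-1}\int_0^T\|B^*S(T-t)^*\varphi\|_U^2\,dt$, the weak observability inequality \eqref{yu-6-18-3} holds with $C=T^{-1}$ and any $\delta\in(0,1)$, which yields \textbf{(CC)}-stabilizability by \cite[Theorem 1]{Trelat-20}.)

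For the failure of \textbf{(DC)}$_T$-stabilizability, by Theorem \ref{th03} it is enough to show that for every $T>0$ the inequality \eqref{e107} is false for all $N\in\mathbb{N}^+$, $\delta\in(0,1)$, $C\geq0$. I would pass to the Fourier side as in the proof of Lemma \ref{th301w7-7}: since $\widehat{S(t)^*\varphi}(\xi)=e^{\mathtt{i}t\xi^2}\hat\varphi(\xi)$, interchanging the $Y$-valued integral with $\mathcal{F}$ and using Plancherel gives
\[
\Big\|\int_{(j-1)T}^{jT}B^*S(t)^*\varphi\,dt\Big\|_U^2
=\int_{\mathbb{R}}\Big|\int_{(j-1)T}^{jT}e^{\mathtt{i}t\xi^2}\,dt\Big|^2|\hat\varphi(\xi)|^2\,d\xi
=\int_{\mathbb{R}}\frac{4\sin^2(T\xi^2/2)}{\xi^4}\,|\hat\varphi(\xi)|^2\,d\xi,
\]
which is \emph{independent of $j$}; hence the observation term of \eqref{e107} equals $N\int_{\mathbb{R}}g(\xi)|\hat\varphi(\xi)|^2\,d\xi$ with $g(\xi):=4\sin^2(T\xi^2/2)\xi^{-4}$, a bounded function satisfying $g(\xi)\leq 4\xi^{-4}$, whereas $\|S(NT)^*\varphi\|_Y^2=\|\varphi\|_Y^2$. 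Testing with $\varphi_k\in Y$ given by $\hat\varphi_k:=\chi_{[k,k+1]}$ ($k\in\mathbb{N}^+$), one has $\|\varphi_k\|_Y=1$ and $N\int_{\mathbb{R}}g|\hat\varphi_k|^2\,d\xi\leq 4N\int_k^{k+1}\xi^{-4}\,d\xi\leq 4Nk^{-4}$; so for $k$ large enough $C\sum_{j=1}^N\|\int_{(j-1)T}^{jT}B^*S(t)^*\varphi_k\,dt\|_U^2+\delta\|\varphi_k\|_Y^2\leq 4CNk^{-4}+\delta<1=\|S(NT)^*\varphi_k\|_Y^2$, contradicting \eqref{e107}. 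Thus \eqref{e107} holds for no triple $(N,\delta,C)$, and Theorem \ref{th03} forces \eqref{ex07} to fail to be \textbf{(DC)}$_T$-stabilizable, for every $T>0$.

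I do not expect a serious obstacle. The only point needing a little care is the Fourier computation of $\int_{(j-1)T}^{jT}S(t)^*\varphi\,dt$: justifying the exchange of the Bochner integral with $\mathcal{F}$ and evaluating $\int_{(j-1)T}^{jT}e^{\mathtt{i}t\xi^2}\,dt=e^{\mathtt{i}(j-1)T\xi^2}(e^{\mathtt{i}T\xi^2}-1)/(\mathtt{i}\xi^2)$, whose modulus $2|\sin(T\xi^2/2)|/\xi^2$ does not depend on $j$. Conceptually, the $T$-sampled observation of the dispersive unitary flow loses high-frequency information with amplitude decaying like $\xi^{-2}$, while the flow itself is an isometry, so no finite sum of sampled observations can control $\|\varphi\|_Y$ — precisely the mechanism that Theorem \ref{thm1.11-7-26}(i) announces is possible.
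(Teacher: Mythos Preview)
Your proof is correct and follows the same overall route as the paper: the direct feedback $F=-\mathtt{i}\lambda I$ for \textbf{(CC)}-stabilizability, and Theorem \ref{th03} together with the Fourier computation of $\int_{(j-1)T}^{jT}S(t)^*\varphi\,dt$ for the negative part. The paper also observes that the modulus of this integral is independent of $j$ and then produces unit-norm test functions on which the observation term is arbitrarily small.

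The only difference is the choice of test functions. The paper localises $\hat\varphi$ in a small interval around $\xi_0=\sqrt{2\pi/T}$, a zero of $1-e^{\mathtt{i}T\xi^2}$, and uses $|1-e^{\mathtt{i}s}|\leq|2\pi-s|$ to make the numerator of $g$ small while the denominator stays bounded away from zero; this is a resonance/aliasing argument. You instead send $\hat\varphi_k=\chi_{[k,k+1]}$ to high frequency and use only the crude bound $g(\xi)\leq4\xi^{-4}$; this is a dispersive argument. Your variant is a bit shorter and avoids the careful choice of $\eta$ in \eqref{yu-6-16-11-bb}--\eqref{yu-6-16-12}; the paper's variant has the virtue of pinpointing the exact frequencies that a $T$-periodic sample cannot see. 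Either mechanism suffices, since all one needs is a sequence along which $g\to0$.
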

  \begin{proof}
    First, by using the Fourier transform and the Plancherel identity, one can  easily check  that, for each $\gamma>0$, the closed-loop system $\verb"i"\partial_ty(x,t)+\partial_{x}^2y(x,t)=-\verb"i"\gamma y(x,t)$ in $\mathbb{R}\times\mathbb{R}^+$ is exponentially stable. Thus, the system (\ref{ex07}) is \textbf{(CC)}-stabilizable.
\par
    We next prove that the system (\ref{ex07}) is not \textbf{(DC)}$_T$-stabilizable for any $T>0$.
     Since $\{S(t)\}_{{t\in \mathbb{R}}}$ is a unitary group on $L^2(\mathbb{R}; \mathbb{C})$ and $B=-\verb"i"I$,
     according to the equivalence of $(i)$ and $(iii)$ in Theorem \ref{th03},  it suffices to show that for each $T>0$, each $N\in\mathbb{N}^+$, and each $\varepsilon>0$, there is a $\varphi\in L^2(\mathbb{R}; \mathbb{C})$ with $\|\varphi\|_{L^2(\mathbb{R}; \mathbb{C})}=1$ such that
 \begin{equation}\label{yu-6-16-10}
    \sum_{i=1}^N\Big\|\int_{(i-1)T}^{iT}S(t)^*\varphi dt\Big\|_{L^2(\mathbb{R}; \mathbb{C})}\leq \varepsilon.
 \end{equation}
    To prove \eqref{yu-6-16-10}, we arbitrarily fix a $T>0$, $N\in\mathbb{N}^+$, and $\varepsilon>0$.
    Since
    \begin{equation*}
    1-e^{\verb"i"s}=\verb"i"\int_s^{2\pi}e^{\verb"i"\theta}d\theta,
    \end{equation*}
    we see that   for each
    $\eta>0$,
\begin{equation}\label{yu-6-16-11}
    |1-e^{\verb"i"s}|\leq |2\pi-s|\leq \eta\;\;\mbox{when}\;\;s\in(2\pi-\eta,2\pi+\eta).
\end{equation}
    We arbitrarily fix an $\eta\in(0,2\pi)$ such that
\begin{equation}\label{yu-6-16-11-bb}
    \left(\frac{\eta T}{2\pi-\eta}\right)^2\leq \frac{\varepsilon}{N}.
\end{equation}
  Then, we arbitrarily fix a non-zero function $f\in C_c^{\infty}(\mathbb{R}; \mathbb{C})$ such that
\begin{equation}\label{yu-6-16-12}
    \mbox{supp}(f)\subset \mathcal{I}:=\Big(\sqrt{\frac{2\pi-\eta}{T}}, \sqrt{\frac{2\pi+\eta}{T}} \Big)(\subset\mathbb{R}^+),
\end{equation}
    where $\mbox{supp}(f)$ denotes the support of $f$. Let
\begin{equation}\label{yu-6-16-13}
    \varphi:=\|f\|^{-1}_{L^2(\mathbb{R}; \mathbb{C})}\mathcal{F}^{-1}[f].
\end{equation}
   Then, by the Plancherel identity and the fact that $f\neq 0$, we obtain
\begin{equation}\label{yu-6-16-14}
    \|\varphi\|_{L^2(\mathbb{R}; \mathbb{C})}=1.
\end{equation}
    Let
\begin{equation}\label{yu-6-16-14-b}
    z(x,t):=[S(t)^*\varphi](x),\;\; (x,t)\in\mathbb{R}\times\mathbb{R}^+.
\end{equation}
    One can easily check that $z(\cdot,\cdot)$ is the solution to the following equation:
\begin{equation*}\label{yu-6-16-15}
\begin{cases}
    \verb"i"\partial_{t}z(x,t)-\partial_{x}^2z(x,t)=0\;\;\mbox{in}\;\; \mathbb{R}\times\mathbb{R}^+,\\
    z(x,0)=\varphi(x)\;\;\mbox{in}\;\;\mathbb{R}.
\end{cases}
\end{equation*}
    As $\varphi\in L^2(\mathbb{R}; \mathbb{C})$, applying the Fourier transform to the above equation leads to
\begin{equation*}\label{yu-6-16-16}
    \mathcal{F}[z(\cdot,t)](\xi)=e^{\verb"i"\xi^2t}\mathcal{F}[\varphi](\xi),
    \;\;\xi\in\mathbb{R}.
\end{equation*}
    This, together with the fact that $\{S(t)\}_{{t\in \mathbb{R}}}$ is a unitary group, the Plancherel identity, (\ref{yu-6-16-14-b}), (\ref{yu-6-16-13}), and (\ref{yu-6-16-12}) yields that for each $i\in\{1,2,\ldots, N\}$,
\begin{eqnarray}\label{yu-6-16-17}
    &\;&\Big\|\int_{(i-1)T}^{iT}S(t)^*\varphi dt\Big\|^2_{L^2(\mathbb{R}; \mathbb{C})}=\Big\|\int_0^TS(t)^*\varphi dt\Big\|^2_{L^2(\mathbb{R}; \mathbb{C})}
    =\Big\|\int_0^Te^{\verb"i"\xi^2t}\mathcal{F}[\varphi]dt\Big\|^2_{L^2(\mathbb{R}; \mathbb{C})}\nonumber\\
    &\leq&\|f\|^{-2}_{L^2(\mathbb{R}; \mathbb{C})}\int_{\mathcal{I}}\Big|\int_0^T
    e^{\verb"i"\xi^2t}dtf(\xi)\Big|^2d\xi=\|f\|^{-2}_{L^2(\mathbb{R}; \mathbb{C})}\int_{\mathcal{I}}
    \frac{|1-e^{\verb"i"\xi^2T}|^2}{|\xi|^4}|f(\xi)|^2d\xi.
\end{eqnarray}
   Meanwhile, by (\ref{yu-6-16-11}) and (\ref{yu-6-16-12}), we have
\begin{equation*}\label{yu-6-16-18}
    \frac{|1-e^{\verb"i"\xi^2T}|^2}{|\xi|^4}\leq  \left(\frac{\eta T}{2\pi-\eta}\right)^2
    \;\;\mbox{for any}\;\;\xi \in\mathcal{I}.
\end{equation*}
    This, along with (\ref{yu-6-16-17}) and (\ref{yu-6-16-11-bb}), indicates that
\begin{equation*}\label{yu-6-16-19}
    \sum_{i=1}^N\Big\|\int_{(i-1)T}^{iT}S(t)^*\varphi dt\Big\|_{L^2(\mathbb{R}; \mathbb{C})}
    \leq N\|f\|^{-2}_{L^2(\mathbb{R}; \mathbb{C})}\Big(\frac{\eta T}{ 2\pi-\eta}\Big)^2
    \int_{\mathcal{I}}|f(\xi)|^2d\xi\leq N\Big(\frac{\eta T}{2\pi-\eta}\Big)^2\leq \varepsilon,
\end{equation*}
      which, together with (\ref{yu-6-16-14}), shows that $\varphi$ defined by
      (\ref{yu-6-16-13}) verifies (\ref{yu-6-16-10}).
\par
    Thus, we have completed the proof of Theorem \ref{th301-b}.
 \end{proof}

\bibliographystyle{plain}

\end{document}